\documentclass[reqno]{amsart}
\usepackage{amssymb, amsmath, amsthm, amscd, mathrsfs}

\usepackage{paralist}
\usepackage{cite}
\usepackage{bigints}
\usepackage{dsfont}
\usepackage{empheq}
\usepackage{amssymb}
\usepackage{cases}
\usepackage{enumitem}
\allowdisplaybreaks

\usepackage{caption}
\usepackage{booktabs}

\usepackage{float}
\usepackage[ruled,vlined,linesnumbered,noresetcount]{algorithm2e}

\usepackage{hyperref}

\textheight 23.5truecm \textwidth 15.5truecm
\setlength{\oddsidemargin}{0.35in}\setlength{\evensidemargin}{0.35in}

\setlength{\topmargin}{-.5cm}

\theoremstyle{thmstyletwo}
\newtheorem{theorem}{Theorem}
\newtheorem{proposition}[theorem]{Proposition}
\newtheorem{teo}{Theorem}[section]
\newtheorem{lemma}{Lemma}[section]

\theoremstyle{definition}
\newtheorem{defi}[teo]{Definition}
\newtheorem{rmq}[teo]{Remark}

\numberwithin{equation}{section}

\def \d {\mathrm{d}}

\title[Logarithmic convexity and impulsive controllability] 
      {Logarithmic convexity and impulsive controllability for the 1-D heat equation with dynamic boundary conditions}

\author{S. E. Chorfi}
\author{G. El Guermai}
\author{L. Maniar}
\author{W. Zouhair}

\address{S. E. Chorfi, G. El Guermai, L. Maniar and W. Zouhair, Cadi Ayyad University, Faculty of Sciences Semlalia, LMDP, UMMISCO (IRD-UPMC), B.P. 2390, Marrakesh, Morocco}

\email{chorphi@gmail.com, ghita.el.guermai@gmail.com, maniar@uca.ma, walid.zouhair.fssm@gmail.com}

\makeatletter 
\@namedef{subjclassname@2020}{%
  \textup{2020} Mathematics Subject Classification}
\makeatother

\subjclass[2020]{35R12, 49N25, 93C27.}
 \keywords{Impulsive approximate controllability, impulsive control problems, Carleman commutator, logarithmic convexity, dynamic boundary conditions, Hilbert Uniqueness Method.}

\begin{document}
\begin{abstract}
In this paper, we prove a logarithmic convexity that reflects an observability estimate at a single point of time for 1-D heat equation with dynamic boundary conditions. Consequently, we establish the impulse approximate controllability for the impulsive heat equation with dynamic boundary conditions. Moreover, we obtain an explicit upper bound of the cost of impulse control. At the end, we give a constructive algorithm for computing the impulsive control of minimal $L^2$-norm. We also present some numerical tests to validate the theoretical results and show the efficiency of the designed algorithm.
\end{abstract}

\maketitle

\section{Introduction and main results}
Impulsive systems are of vital importance for most scientific fields; they can be found in many applications ranging from, among others, engineering, biology, population dynamics, economics, see e.g., \cite{articleNII,NIIexamples, MBEYR}. Many physical phenomena are modeled via evolution equations. Controls, impulses, and dynamic boundary conditions are often added to capture either a feedback or activity characterization.

In this paper, our interest is to investigate the impulse controlled heat equation with dynamic boundary conditions given by

\begin{empheq}[left = \empheqlbrace]{alignat=2} \label{1.1}
\begin{aligned}
&\partial_{t} \psi(x,t)-\partial_{xx} \psi(x,t)=0, && \qquad (x,t)\in(a,b) \times(0, T) \backslash\{\tau\},\\
&\psi(x, \tau)=\psi\left(x, \tau^{-}\right)+\mathds{1}_{\omega}(x) h(x), && \qquad x\in (a,b),\\
&\partial_{t}\psi(a,t) - \partial_{x} \psi(a,t)=0, && \qquad t\in(0, T) \backslash\{\tau\}, \\
&\partial_{t}\psi(b,t) + \partial_{x} \psi(b,t)=0, && \qquad t\in(0, T) \backslash\{\tau\}, \\
&\psi(a, \tau)=\psi\left(a, \tau^{-}\right),\\
&\psi(b, \tau)=\psi\left(b, \tau^{-}\right),\\
& \left(\psi(x,0),\psi(a, 0),\psi(b, 0)\right)=\left(\psi^0(x),c,d\right), && \qquad x\in(a,b),
\end{aligned}
\end{empheq}
where $\left(a,b\right) \subset \mathbb{R}$ is an open interval, $T > 0$ is the final time, $\tau \in (0,T)$ is an impulse time, $\left(\psi^{0},c,d\right)\in L^2\left(a,b\right)\times \mathbb R^2$ denotes the initial data, $\psi(\cdot,\tau^{-})$ denotes the left limit of the function $\psi$ at time $\tau$, $\omega\Subset (a,b)$ is a nonempty open subset, and $h\in L^2(\omega)$ is the impulse control.

The theory of impulsive differential equations was initiated by Milman and Mishkis in 1960 \cite{MVDM}. Afterward, many scientists contributed to the enrichment of this theory for more general evolution equations from various theoretical and numerical aspects. Many studies have been launched on this discipline and a large number of results has been reached. Controllability and observability are among the most important properties investigated within this theory, see for instance \cite{YT,Jose}. A system is controllable if we can drive the state from any initial condition to any desired target within a finite period of time either exactly or approximately. A system is observable if we can determine the state of the system based on some measured output data, see more in \cite{Bensoussan07, Zuazua07}.

Unlike the distributed controllability for impulsive systems that has been extensively studied in the literature, see for instance \cite{CDJUHLOC,ACCDGHL,GL,CDGHL,LHWZME,Lalvay} and the references therein, the problem of controllability with impulse controls has attracted less attention and not as many works are available in this area, we mention \cite{ABWZ,ka,pkdgwyx, QSGW}. In the later type, the control is a function acting only at one instant of time $\tau$. This is considerably different from the former type in which the control is acting in the whole time interval $(0, T)$.

Parabolic systems with dynamic boundary conditions, especially the one-dimensional systems, appear in many models and occupy a particular attention in recent literature. For instance, they model the distribution of heat in a bar of a given length \cite{KN'04}, the flow of heat for a solid in contact with a fluid \cite{La'32}, and the infiltration of water through a partially saturated porous medium (e.g., the rainfall through the soil) \cite{LS'85}. In the absence of impulses, the null controllability of the 1-D heat equation with dynamic boundary conditions has been recently studied in \cite{Kh'20}, Chapter 5, by using the moment method developed by Fattorini and Russell in \cite{FR'71}.

Let us briefly recall the derivation of a 1-D heat conduction model with dynamic boundary conditions. We consider the cooling of a uniform, isotropic, thin solid bar of cylindrical form, whose lateral surface is thermally insulated and its faces are placed at $x=a$ and $x=b$. Suppose that the two ends of the bar are placed in contact with a liquid and governed by an initial temperature $\psi^0$ at time $t=0$.

For simplicity and without loss of generality, we shall assume that the diffusivity is the unity constant $d=1$. The problem then is to find the temperature $\psi(x,t)$ of points in the bar or of the liquid at any intermediate time within a time horizon $[0,T]$. Using the law of conservation of energy and Fourier's law on heat conduction for the interior points of the bar, we obtain the classical heat equation
$$\partial_t \psi(x,t) -\partial_{xx} \psi(x,t)=0, \qquad (x,t) \in (a,b) \times (0,T).$$
The fact that the gain of heat by the liquid at the bar ends is equal to the loss of heat by the bar gives rise to the dynamic boundary conditions
\begin{align*}
\partial_{t}\psi(a,t) &= \partial_{x} \psi(a,t), \;\;\,\qquad t\in (0, T), \\
\partial_{t}\psi(b,t) &=- \partial_{x} \psi(b,t), \qquad t \in (0, T).
\end{align*}
The main aim of the model \eqref{1.1} is to control the heat distribution in the whole bar by acting only on an arbitrary internal part $\omega \Subset (a,b)$ at a single impulse time $\tau \in (0,T)$.

In the multi-dimensional case of a bounded domain $\Omega \subset \mathbb{R}^N$, $N\ge 2$, with a smooth boundary $\Gamma=\partial \Omega$, the dynamic boundary condition takes the form
\begin{equation}\label{dbcN}
\partial_t \psi_{\Gamma} -\delta \Delta_{\Gamma} \psi_{\Gamma} +\partial_\nu \psi=0,
\end{equation}
where $\psi_\Gamma=\psi_{|\Gamma}$ is the trace of $\psi$, $\Delta_{\Gamma}$ is the Laplace-Beltrami operator and $\partial_\nu \psi$ is the normal derivative with respect to the outward unit normal vector field $\nu$. The controllability and inverse problems for the non-impulsive heat equation with the dynamic boundary condition \eqref{dbcN} have been considered in the recent papers \cite{ACM'21',BCMO'20,MMS'17,KM'19,ACMO'20}. The impulse approximate controllability has been recently investigated in \cite{CGMZ'21}. In all these works, the presence of the diffusion on the boundary, i.e., $\delta>0$, has helped to overcome a technical difficulty in establishing observability estimates, see \cite[Remark 3.3]{MMS'17}. Such estimates for $\delta=0$ (absence of boundary diffusion) still open. Motivated by the aforementioned fact, we consider in the present work the one-dimensional case where no diffusion occur on the boundary. This makes our problem of particular interest and rather different from the previous works.

To prove the approximate controllability of system \eqref{1.1}, we establish an observability estimate supported at the final time $T$ and localized in the space subset $\omega$. This result will be done by employing a logarithmic convexity inequality based on a Carleman commutator approach, see e.g., \cite{ACM'21, ACM'21'', BCKDP, pkm, RBKDP}. This approach has been recently considered in \cite{pkm} and \cite{RBKDP} for heat equation with homogeneous Dirichlet and Neumann boundary conditions. In our setting, we deal with several new boundary terms that need to be absorbed. Moreover, no numerical results were presented in the previous works. Therefore, we will present an algorithm for the numerical computation of the impulse control of minimal $L^2$-norm. This will be achieved by adapting (to the impulsive case) the penalized Hilbert Uniqueness Method (HUM) and a Conjugate Gradient (CG) method. More precisely, the strategy that we follow is based on a formulation of the control problem under the form of a suitable convex quadratic optimization problem, which allows us to determine the minimum energy control. The interested reader can refer to the book \cite{GL'08} and the paper \cite{Bo'13}.

Next, we state our first main result which is an observability estimate at one instant of time. The proof is given in Section \ref{sec3}.
\begin{lemma}\label{lem1.1}
Let $\omega \Subset (a,b)$ be an open nonempty set. Let $\langle \cdot, \cdot\rangle$ denote the standard inner product of
$L^2(a,b) \times \mathbb{R}^{2}$ and $\|\cdot \|$ be its corresponding norm. Then the following logarithmic convexity estimate holds
\begin{equation}\label{1.2}
\|U(\cdot, T)\| \leq\left(\mu \mathrm{e}^{\frac{K}{T}}\|u(\cdot, T)\|_{L^{2}(\omega)}\right)^{\beta}\|U(\cdot, 0)\|^{1-\beta},
\end{equation}
where $\mu, K >0$, $\beta \in (0,1)$ and $U=\left(u(\cdot,\cdot),u(a,\cdot),u(b,\cdot)\right)$ is the solution of the following system
\begin{empheq}[left = \empheqlbrace]{alignat=2}\label{1.3}
\begin{aligned}
&\partial_{t} u(x,t)-\partial_{xx} u(x,t)=0, && \qquad (x,t)\in (a,b) \times(0, T) , \\
&\partial_{t}u(a,t) - \partial_{x}u(a,t) =0, && \qquad t\in (0, T), \\
&\partial_{t}u(b,t) + \partial_{x}u(b,t) =0, && \qquad t\in (0, T), \\
&\left(u(x,0),u(a, 0),u(b, 0)\right)=\left(u^0(x),c_{1},d_{1}\right), && \qquad x\in (a,b).
\end{aligned}
\end{empheq}
\end{lemma}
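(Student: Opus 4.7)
The plan is to adapt the Carleman-commutator / logarithmic-convexity strategy of \cite{pkm, RBKDP} to the state space $L^2(a,b)\times\mathbb{R}^2$ in which the boundary traces $u(a,\cdot)$ and $u(b,\cdot)$ are treated as genuine components of the state $U$. As the first step, I would pick a cutoff-type weight $\phi\in C^2([a,b])$ which is nonnegative on $[a,b]$, vanishes on a subset of $\omega$, is strictly positive elsewhere, and whose values and derivatives at $x=a,b$ are adapted to the sign of the boundary contributions that will appear below; then set $\Phi(x,t)=\phi(x)/(T-t+h)$, where $h>0$ is a parameter that will eventually be tuned in terms of $T$.

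The second step is to consider the weighted quantity $F(t)=\tfrac{1}{2}\|e^{-s\Phi(\cdot,t)}U(\cdot,t)\|^2$ for a large parameter $s>0$, and to decompose the conjugated heat operator $e^{-s\Phi}(\partial_t-\partial_{xx})e^{s\Phi}$ into a self-adjoint part $\mathcal{S}$ and a skew-adjoint part $\mathcal{A}$ with respect to $\langle\cdot,\cdot\rangle$. Writing $w=e^{-s\Phi}U$, one obtains $F'(t)=\langle \mathcal{S}w,w\rangle$ and
$$F''(t)=\langle[\mathcal{S},\mathcal{A}]w,w\rangle+2\|\mathcal{S}w\|^2+\text{(l.o.t.)}.$$
Combined with the Cauchy--Schwarz inequality $F'(t)^2\le 2F(t)\|\mathcal{S}w\|^2$ and a lower bound on the commutator of the form $\langle[\mathcal{S},\mathcal{A}]w,w\rangle\ge -C(T-t+h)^{-2}F(t)$, this yields the log-convexity inequality
$$(\log F)''(t)\ge -\frac{C}{(T-t+h)^2}.$$
Integrating this inequality twice on $[0,T]$ in the standard way produces a Hölder interpolation estimate of the form $F(T)\le (Ce^{K/h}F_\omega(T))^{\beta}F(0)^{1-\beta}$, with $F_\omega(T)$ a weighted quantity concentrated on the zero set of $\phi$, hence controlled by $\|u(\cdot,T)\|_{L^2(\omega)}^2$. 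Choosing $h$ of order $T$ and observing that, at $t=0$ and on the left-hand side, the weights are bounded above and below by constants, one recovers exactly the stated estimate \eqref{1.2} with constants $\mu,K>0$ and a Hölder exponent $\beta\in(0,1)$.

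The principal obstacle is the boundary contributions generated by the dynamic boundary conditions: unlike in \cite{pkm,RBKDP}, the integrations by parts in the computation of $F'$ and of $\langle[\mathcal{S},\mathcal{A}]w,w\rangle$ produce nonzero terms at $x=a,b$ that cannot be annihilated by Dirichlet or Neumann conditions. To handle them, I would exploit the dynamic boundary conditions themselves, namely $\partial_xu(a,t)=\partial_tu(a,t)$ and $\partial_xu(b,t)=-\partial_tu(b,t)$, to rewrite the offending contributions as total time derivatives of quadratic forms in the traces $u(a,t),u(b,t)$, and then absorb them into an augmented functional of the form
$$\widetilde{F}(t)=F(t)+\tfrac{1}{2}e^{-2s\Phi(a,t)}|u(a,t)|^2+\tfrac{1}{2}e^{-2s\Phi(b,t)}|u(b,t)|^2,$$
which is equivalent to $\tfrac{1}{2}\|e^{-s\Phi(\cdot,t)}U(\cdot,t)\|^2$ up to constants. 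The sign conditions imposed on $\phi'(a)$ and $\phi'(b)$ are chosen precisely so that the remaining boundary commutator terms are either of good sign or dominated by $(T-t+h)^{-1}\widetilde{F}(t)$, so that the log-convexity inequality continues to hold for $\widetilde{F}$; the rest of the argument then proceeds as in the classical Phung--Wang scheme.
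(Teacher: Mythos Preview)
Your overall strategy (Carleman weight, symmetric/antisymmetric decomposition, frequency-function differential inequality, three-point interpolation, then optimization in the auxiliary parameter) is the same one the paper follows. However, two of your technical choices diverge from the paper and, as written, would not close.

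First, the differential inequality you aim for, $(\log F)''\ge -C/(T-t+h)^2$, is too strong. In the paper the commutator estimate (Step~4) does \emph{not} give $\langle[\mathcal{S},\mathcal{A}]w,w\rangle\ge -C\Upsilon^{-2}F$; it gives instead
\[
\langle -(\mathcal{S}'+[\mathcal{S},\mathcal{A}])F,F\rangle + (\text{boundary terms}) \;\le\; (1+C_0)\,\langle -\mathcal{S}F,F\rangle + \frac{C}{\hbar^2}\|F\|^2,
\]
which translates into $\mathcal{N}'\le \frac{1+C_0}{\Upsilon}\mathcal{N}+\frac{C}{\hbar^2}$ for the frequency function $\mathcal{N}=\langle -\mathcal{S}F,F\rangle/\|F\|^2$. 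This inequality is \emph{not} a second-order bound on $\log F$; the extra factor $\frac{1+C_0}{\Upsilon}\mathcal{N}$ is essential, and integrating it requires the weighted interpolation lemma of \cite{RBKDP} (with the exponent $M$ depending on $C_0\in(0,1)$), not a plain two-fold integration. Your simplified scheme would miss the $(1+C_0)/\Upsilon$ coefficient and would not produce the correct interpolation.

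Second, your plan to deal with boundary terms by using the dynamic conditions to rewrite them as total time derivatives and absorb them into an augmented $\widetilde{F}$ is not how the paper proceeds, and it is unclear that it works: after conjugation $f=ue^{\Phi/2}$, the quantity $\partial_x f(b,t)$ involves $\partial_t u(b,t)$ \emph{and} $\partial_x\Phi(b,t)f(b,t)$, so terms like $|\partial_x f(b,t)|^2$ do not become exact time derivatives. The paper instead builds the boundary components directly into the operators: $\mathcal{S}$ and $\mathcal{A}$ are defined as $3\times3$ matrix operators on $\mathbb{L}^2=L^2(a,b)\times\mathbb{R}^2$, the relation $\partial_t F=\mathcal{S}F+\mathcal{A}F$ holds in that space, and the commutator (together with the extra boundary terms coming from integration by parts against the first row) is computed explicitly. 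The crucial sign control then comes from the specific quadratic weight $\varphi(x)=-|x-x_0|^2/4$ with $x_0\in\omega$: since $x_0\in(a,b)$, one has $\partial_x\varphi(a)>0$ and $\partial_x\varphi(b)<0$, which makes the dangerous terms $\frac{s}{\Upsilon}\partial_x\varphi(b)|\partial_xf(b,t)|^2-\frac{s}{\Upsilon}\partial_x\varphi(a)|\partial_xf(a,t)|^2$ nonpositive; the remaining cross terms $\partial_xf\cdot f$ at the boundary are handled by Young's inequality. A general ``cutoff-type'' $\phi$ as you propose will not automatically yield these signs at both endpoints.
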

We will prove the estimate \eqref{1.2} keeping track of the explicit dependence of all the relevant constants with respect to different parameters. This will allow us to obtain an explicit upper bound for the cost of the impulsive approximate controllability of system \eqref{1.1}.

\begin{rmq} Some remarks are in order:
\begin{itemize}
    \item We emphasize that \eqref{1.2} is an observability inequality estimating the whole solution $U=\left(u(\cdot,\cdot),u(a,\cdot),u(b,\cdot)\right)$ of system \eqref{1.3} at terminal time $T$. This is done by only using one internal observation on the first component $u$, which is localized in the subset $\omega$.
    \item The estimate \eqref{1.2} is of independent interest. It allows one to prove an observability inequality for the non-impulsive system from any measurable set $E \subset (0,T]$ with a positive Lebesgue measure. This can be used to study the bang-bang property for the time optimal control problem \cite{pkdlwzc}.
\end{itemize}
\end{rmq}

The rest of the paper is organized as follows: in Section \ref{sec2}, we briefly recall some results on the wellposedness of the system. In Section \ref{sec3}, we present the strategy to obtain the observation estimate at one point of time for system \eqref{1.1}. Section \ref{sec4} is devoted to the impulse approximate controllability of the system. Finally, Section \ref{sec5} includes a constructive algorithm for computing the impulse control of minimal $L^2$-norm illustrated by numerical simulations.

\section{Wellposedness of the system} \label{sec2}
In this section, we recall some results that will be useful in the sequel. We will often use the following real Hilbert space $\mathbb{L}^2:=L^2(a,b)\times \mathbb{R}^{2}$, equipped with the inner product
\begin{align*}
\langle (u,c,d),(v,c_{1},d_{1})\rangle =\langle u,v\rangle_{L^2(a,b)} +c c_{1}+d d_{1}.
\end{align*}
We also consider the spaces
$$\mathbb{H}^k:=\left\{(u,u(a),u(b)) \colon u\in H^k\left(a,b\right)\right\} \text{ for } k=1,2,$$ equipped with the standard product norms.

System \eqref{1.3} can be written as the following abstract Cauchy problem
\begin{numcases}{\text{(ACP)}\quad\label{acp}}
\hspace{-0.1cm} \partial_t \mathbf{U}=\mathbf{A} \mathbf{U}, \quad 0<t \le T, \nonumber\\
\hspace{-0.1cm} \mathbf{U}(0)=\left(u^0, c_1,d_1\right), \nonumber
\end{numcases}
where $\mathbf{U}:=(u,u\left(a,\cdot\right),u\left(b,\cdot\right))$ and the linear operator $\mathbf{A} \colon D(\mathbf{A}) \subset \mathbb{L}^2 \longrightarrow \mathbb{L}^2$ is given by
\begin{equation*}
\mathbf{A}=\begin{pmatrix}
\partial_{xx} & 0 & 0 \\[3mm]
{\partial_{x}}_{|x=a} & 0 & 0\\[3mm]
{-\partial_{x}}_{|x=b} & 0 & 0
\end{pmatrix} , \qquad \qquad D(\mathbf{A})=\mathbb{H}^2. \label{E21}
\end{equation*}

We recall the following generation result that has been proven in \cite{Kh'20}, Proposition 5.2.1.
\begin{proposition}
The operator $\mathbf{A}$ is densely defined, self-adjoint and generates an analytic $C_{0}$-semigroup of contractions $\left(\mathrm{e}^{t \mathbf{A}}\right)_{t \geq 0}$ of angle $\dfrac{\pi}{2}$ on $\mathbb{L}^{2}$.
\end{proposition}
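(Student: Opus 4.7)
The plan is to obtain this result through the classical form method: realize $-\mathbf{A}$ as the operator associated with a closed, densely defined, symmetric, non-negative sesquilinear form on $\mathbb{L}^{2}$, and then invoke the standard equivalence between such forms and non-negative self-adjoint operators, together with the fact that minus a non-negative self-adjoint operator generates an analytic contraction semigroup of angle $\pi/2$.

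First, I would check that $D(\mathbf{A})=\mathbb{H}^{2}$ is dense in $\mathbb{L}^{2}$. This reduces to showing that, given $(f,c,d)\in L^{2}(a,b)\times\mathbb{R}^{2}$ and $\varepsilon>0$, one can find $u\in H^{2}(a,b)$ with $\|u-f\|_{L^{2}(a,b)}<\varepsilon$, $u(a)=c$, $u(b)=d$. This is done by first approximating $f$ in $L^{2}$ by a smooth function $\tilde f$, then adding a smooth correction of arbitrarily small $L^{2}$-norm, supported near the endpoints, that adjusts the boundary values to $c$ and $d$. Next, a direct integration by parts on $U=(u,u(a),u(b))$, $V=(v,v(a),v(b))\in D(\mathbf{A})$ gives
\begin{equation*}
\langle \mathbf{A}U,V\rangle=\int_{a}^{b}u''(x)v(x)\,\d x+u'(a)v(a)-u'(b)v(b)=-\int_{a}^{b}u'(x)v'(x)\,\d x,
\end{equation*}
because the two boundary contributions produced by the integration by parts cancel exactly against the dynamic-boundary entries of $\mathbf{A}U$. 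This identity simultaneously shows that $\mathbf{A}$ is symmetric and that $\langle\mathbf{A}U,U\rangle=-\|u'\|_{L^{2}(a,b)}^{2}\le 0$, i.e.\ dissipative.

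Now I introduce on $\mathbb{H}^{1}$ the sesquilinear form
\begin{equation*}
\mathfrak{a}(U,V):=\int_{a}^{b}u'(x)v'(x)\,\d x,\qquad D(\mathfrak{a})=\mathbb{H}^{1},
\end{equation*}
and verify it is densely defined, symmetric, non-negative, and closed: closedness follows because $\mathbb{H}^{1}$ endowed with $\|U\|_{\mathfrak{a}}^{2}:=\|U\|_{\mathbb{L}^{2}}^{2}+\mathfrak{a}(U,U)$ is complete (equivalent to the usual $H^{1}$-norm thanks to the trace estimate on the one-dimensional interval). Kato's first representation theorem then associates with $\mathfrak{a}$ a unique non-negative self-adjoint operator $\mathbf{B}$ on $\mathbb{L}^{2}$. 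To conclude $\mathbf{B}=-\mathbf{A}$, I solve, for $\lambda>0$ and arbitrary data $(f,\alpha,\beta)\in\mathbb{L}^{2}$, the variational problem
\begin{equation*}
\mathfrak{a}(U,V)+\lambda\langle U,V\rangle=\langle (f,\alpha,\beta),V\rangle\qquad\forall V\in\mathbb{H}^{1},
\end{equation*}
via Lax--Milgram; a bootstrap argument using the equation $\lambda u-u''=f$ in the distributional sense and then the two boundary identities $\lambda u(a)-u'(a)=\alpha$ and $\lambda u(b)+u'(b)=\beta$ recovered from testing against suitable $V$ shows $U\in\mathbb{H}^{2}=D(\mathbf{A})$ and $(\lambda I-\mathbf{A})U=(f,\alpha,\beta)$. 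Hence $\lambda I-\mathbf{A}$ is surjective, so the dissipative symmetric operator $\mathbf{A}$ coincides with the self-adjoint operator $-\mathbf{B}$.

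Finally, self-adjointness plus non-positivity of $\mathbf{A}$ yields, by the spectral theorem (equivalently Lumer--Phillips for contractivity and the standard analytic-semigroup criterion for self-adjoint dissipative operators), that $\mathbf{A}$ generates an analytic $C_{0}$-semigroup of contractions on $\mathbb{L}^{2}$, whose analytic extension covers the whole open right half-plane, i.e.\ angle $\pi/2$. The main technical obstacle I anticipate is the elliptic regularity step: showing that the weak solution provided by Lax--Milgram actually lies in $\mathbb{H}^{2}$ and that the Robin-type conditions $\lambda u(a)-u'(a)=\alpha$, $\lambda u(b)+u'(b)=\beta$ are recovered pointwise; in the one-dimensional setting this is straightforward but requires carefully choosing test vectors $V$ with prescribed boundary values to extract the two scalar boundary equations from the variational identity.
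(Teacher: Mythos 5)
Your proof is correct. Note, however, that the paper does not actually prove this proposition: it is quoted verbatim from \cite{Kh'20}, Proposition 5.2.1, so there is no in-text argument to compare against. Your form-method route is the standard (and almost certainly the cited) one for operators with dynamic boundary conditions: the key identity $\langle \mathbf{A}U,V\rangle=-\int_a^b u'v'\,\d x$ is computed correctly (the boundary terms from integrating $\int u''v$ by parts cancel exactly against the entries $u'(a)$ and $-u'(b)$ of $\mathbf{A}U$), the form $\mathfrak{a}$ on $\mathbb{H}^1$ is closed because the traces $u(a),u(b)$ are controlled by the $H^1(a,b)$-norm in one dimension, and the Lax--Milgram plus one-dimensional regularity step correctly recovers $D(\mathbf{A})=\mathbb{H}^2$ together with the Robin-type identities $\lambda u(a)-u'(a)=\alpha$, $\lambda u(b)+u'(b)=\beta$, which are precisely the second and third components of $(\lambda I-\mathbf{A})U$. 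Surjectivity of $\lambda I-\mathbf{A}$ combined with symmetry and dissipativity then gives self-adjointness, and the spectral theorem yields the analytic contraction semigroup of angle $\pi/2$. The only stylistic remark is that you could shortcut the identification $\mathbf{B}=-\mathbf{A}$ by invoking the standard criterion that a symmetric operator with $\mathcal{R}(\lambda I-\mathbf{A})=\mathbb{L}^2$ for one real $\lambda>0$ is self-adjoint, but your variational identification is equally valid and makes the domain characterization explicit.
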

It follows that the solution map $t\mapsto \mathrm{e}^{t\mathbf{A}} \mathbf{U}_0$ is infinitely many times differentiable for $t>0$ and $\mathrm{e}^{t\mathbf{A}} \mathbf{U}_0 \in D\left(\mathbf{A}^m\right)$ for every $\mathbf{U}_0 \in \mathbb{L}^2$ and $m\in \mathbb{N}$.

On the other hand, the system \eqref{1.1} can be presented as the following impulsive Cauchy problem
\begin{equation}\label{ACP}
\text{(ACP)} \;\; \begin{cases}
\hspace{-0.1cm} \partial_t \Psi(t)=\mathbf{A} \Psi(t), \quad (0,T)\setminus\{\tau\}, \nonumber\\
\hspace{-0.1cm} \bigtriangleup \Psi(\tau) = (\mathds{1}_{\omega} h, 0,0) ,\nonumber\\
\hspace{-0.1cm} \Psi(0)=(\psi^0, c,d), \nonumber
\end{cases}
\end{equation}
where $\Psi:=(\psi,\psi\left(a,\cdot\right),\psi\left(b,\cdot\right))$ and $\bigtriangleup \Psi(\tau) :=\Psi\left(\cdot,\tau\right) - \Psi\left(\cdot,\tau^{-}\right)$.
For all $\Psi_{0}:=(\psi^0, c,d) \in \mathbb{L}^2$, the system (ACP) has a unique mild solution given by
$$
\Psi(t) = \mathrm{e}^{t\mathbf{A}} \Psi_{0} + \mathds{1}_{\{t\geq \tau \}}(t)\, \mathrm{e}^{(t-\tau)\mathbf{A}} (\mathds{1}_{\omega} h,0,0), \qquad t\in (0,T).
$$
\section{Logarithmic convexity estimate} \label{sec3}
In this section, we prove Lemma \ref{lem1.1}. We shall follow the strategy presented in \cite{pkm} in modified form. In our context, we need to collect and treat several new boundary terms arising from the dynamic boundary condition.

Following \cite{RBKDP}, we consider the weight function $\Phi: [a,b] \times (0,T)\rightarrow \mathbb{R}$ given by
\[
\Phi(x,t)=\frac{- s \left\lvert x-x_{0}\right\lvert^2}{4(T-t+\hbar )},
\]
where $x_0 \in \omega,\; \hbar > 0$ and $s \in (0,1)$. Let us set
\[
\varphi(x)=\frac{-\left\lvert x-x_{0}\right\lvert^2}{4}\quad \text{and}\quad \Upsilon(t)=T-t+\hbar,
\]
then
$$ \Phi (x,t)=\frac{s\varphi(x)}{\Upsilon(t)}, \qquad \forall (x,t) \in [a,b] \times (0,T).$$
The function $\varphi$ satisfies the following properties
\begin{itemize}
\item[(1)] $\varphi(x)+\left\lvert \varphi_x(x)\right\lvert^{2} = 0,\; \forall x\in [a,b]$,
\item[(2)] $\varphi_x(x)=-\dfrac{1}{2}(x-x_{0}), \;\forall x\in [a,b]$,
\item[(3)] $\varphi_{xx}(x) =-\dfrac{1}{2},\; \forall x\in [a,b]$.
\end{itemize}

\begin{proof}[Proof of Lemma \ref{lem1.1}]
We will divide the proof into several steps.

\noindent\textbf{Step 1.} Let $(u^0,c_1,d_1) \in \mathbb{L}^2 \setminus \{(0,0,0)\}$. Define
\begin{equation*}
F(x, t)=U(x, t)\,\mathrm{e}^{\Phi(x, t) / 2},
\end{equation*}
where $U(x,t) :=\begin{pmatrix}
u(x,t) \\ u(a,t)\\ u(b,t)
\end{pmatrix}$ is the solution of \eqref{1.3} and $F(x,t) :=\begin{pmatrix}
f(x,t) \\ f(a,t)\\ f(b,t)
\end{pmatrix}$. Define the operator $P$ as follows
\begin{equation*}
P F = \mathrm{e}^{\Phi / 2}
\begin{pmatrix}
\partial_{t}-\partial_{xx} & 0 & 0 \\[3mm]
{-\partial_{x}}_{|x=a} & \partial_{t} & 0\\[3mm]
{\partial_{x}}_{|x=b} & 0 & \partial_{t}
\end{pmatrix} \mathrm{e}^{-\Phi / 2} F.
\end{equation*}
Then
\begin{equation*}
P F =
\begin{pmatrix}
\partial_{t} f-\partial_{xx} f-\frac{1}{2}f\left(\partial_{t} \Phi+\frac{1}{2}|\partial_{x} \Phi|^{2}\right)+ \partial_{x} \Phi \partial_{x} f +\frac{1}{2}\partial_{xx} \Phi f \\[3mm]
\partial_{t} f(a,t)- \partial_{x} f(a,t) -\frac{1}{2} f(a,t)\partial_{t} \Phi(a,t)+\frac{1}{2} \partial_{x}\Phi(a,t) f(a,t) \\[3mm]
\partial_{t} f(b,t)+ \partial_{x} f(b,t)-\frac{1}{2} f(b,t)\partial_{t} \Phi(b,t)-\frac{1}{2} \partial_{x}\Phi(b,t) f(b,t)
\end{pmatrix}.
\end{equation*}
Let us define $P_{1}$ as follows
\begin{equation*}
P_{1} F =
\begin{pmatrix}
\partial_{xx} f+\frac{1}{2}f\left(\partial_{t} \Phi+\frac{1}{2}|\partial_{x} \Phi|^{2}\right)- \partial_{x} \Phi \partial_{x} f -\frac{1}{2}\partial_{xx} \Phi f \\[3mm]
 \partial_{x}f(a,t)  +\frac{1}{2} f(a,t)\partial_{t} \Phi(a,t)-\frac{1}{2} \partial_{x}\Phi(a,t) f(a,t) \\[3mm]
- \partial_{x} f(b,t) +\frac{1}{2} f(b,t)\partial_{t} \Phi(b,t)+\frac{1}{2} \partial_{x}\Phi(b,t) f(b,t)
\end{pmatrix}.
\end{equation*}
Since $U$ solves \eqref{1.3}, then $P F =0$. Thus
\begin{equation*}
\begin{pmatrix}
\partial_{t} f (x,t)\\
\partial_{t} f(a,t)\\
\partial_{t} f(b,t)
\end{pmatrix} = P_{1} F(x,t),\qquad \forall (x,t)\in (a,b)\times (0,T).
\end{equation*}
Let us compute the adjoint operator of $P_1.$ For any $G :=\begin{pmatrix}
g \\
g(a)\\
g(b)
\end{pmatrix} \in \mathbb{H}^1$,
\begin{align*}
\left\langle P_1 F , G \right\rangle &=\displaystyle\int_{a}^b \partial_{xx} f g \mathrm{d} x+\int_{a}^b \frac{1}{2} f g\left(\partial_{t} \Phi+\frac{1}{2}|\partial_{x} \Phi|^{2}\right) \mathrm{d}x- \int_{a}^b g \partial_{x} \Phi \partial_{x}f \mathrm{d} x\\
&-\displaystyle\frac{1}{2} \int_{a}^b  \partial_{xx} \Phi fg \mathrm{d}x +\partial_{x}f(a,t)g(a)+\frac{1}{2}f(a,t)g(a)\partial_{t}\Phi(a,t)\\
&-\frac{1}{2}\partial_{x}\Phi(a,t)f(a,t)g(a)-\partial_{x}f(b,t)g(b)+\frac{1}{2}f(b,t)g(b)\partial_{t}\Phi(b,t)\\
&+\frac{1}{2}\partial_{x}\Phi(b,t)f(b,t)g(b)\\
&= \partial_{x}f(b,t)g(b)-\partial_{x}f(a,t)g(a) - \int_{a}^b \partial_{x} f  \partial_{x} g \mathrm{d} x \\
&+\frac{1}{2}\int_{a}^b  f g\left(\partial_{t} \Phi+\frac{1}{2}|\partial_{x} \Phi|^{2}\right) \mathrm{d}x-  \partial_{x}\Phi(b,t) f(b,t) g(b)\\
&+\partial_{x}\Phi(a,t)f(a,t)g(a) +\int_{a}^b\partial_{xx}\Phi f g \mathrm{d}x+\int_{a}^b f\partial_{x}\Phi\partial_{x}g\mathrm{d}x\\
&-\displaystyle \frac{1}{2} \int_{a}^b \partial_{xx} \Phi f g \mathrm{d}x  +\partial_{x}f(a,t)g(a)+\frac{1}{2}f(a,t)g(a)\partial_{t}\Phi(a,t)\\
&-\frac{1}{2}\partial_{x}\Phi(a,t)f(a,t)g(a)-\partial_{x}f(b,t)g(b)+\frac{1}{2}f(b,t)g(b)\partial_{t}\Phi(b,t)\\
&+\frac{1}{2}\partial_{x}\Phi(b,t)f(b,t)g(b)\\
&= -f(b,t)\partial_{x}g(b)+f(a,t)\partial_{x}g(a)+\int_{a}^b f\partial_{xx}g\mathrm{d}x\\
&+\frac{1}{2}\int_{a}^b fg\left(\partial_{t}\Phi+\frac{1}{2}\lvert\partial_{x}\Phi\lvert^2\right)\mathrm{d}x-\frac{1}{2}\partial_{x}\Phi(b,t)f(b,t)g(b)\\
&+\frac{1}{2}\partial_{x}\Phi(a,t)f(a,t)g(a)+\frac{1}{2}\int_{a}^b\partial_{xx}\Phi fg\mathrm{d}x+\int_{a}^b f\partial_{x}\Phi\partial_{x}g\mathrm{d}x\\
&+\frac{1}{2}f(a,t)g(a)\partial_{t}\Phi(a,t)+\frac{1}{2}f(b,t)g(b)\partial_{t}\Phi(b,t)\\
&= \left\langle F, P_{1}^{*} G \right\rangle.
\end{align*}
Next, we introduce the following operator
\begin{equation*}
\mathcal{A} = \frac{P_1 - P_{1}^{*}}{2} = \begin{pmatrix}
-\partial_{x}\Phi\partial_{x} -\frac{1}{2}\partial_{xx}\Phi  & 0 & 0\\[3mm] 
0 & -\frac{1}{2} \partial_{x}\Phi(a,t) & 0 \\[3mm]
0 & 0 & \frac{1}{2} \partial_{x}\Phi(b,t)
\end{pmatrix},
\end{equation*}
which is antisymmetric on $\mathbb{H}^1$. Similarly, we define the following operator
\begin{equation*}
\mathcal{S} = \frac{P_1 + P_{1}^{*}}{2} = \begin{pmatrix}
\partial_{xx} + \eta & 0 & 0 \\[3mm]
{\partial_{x}}_{| x=a} & \frac{1}{2}\partial_{t}\Phi(a,t) & 0 \\[3mm]
-{\partial_{x}}_{| x=b} & 0 & \frac{1}{2}\partial_{t}\Phi(b,t)
\end{pmatrix},\\
\end{equation*}
that is symmetric on $\mathbb{H}^1$, where
\[
\eta = \frac{1}{2} \left( \partial_{t}\Phi + \frac{1}{2}\left\lvert\partial_{x} \Phi \right\lvert^{2}\right).\\
\]
Thus,
\begin{equation*}
\partial_{t} F = \mathcal{S} F + \mathcal{A} F.
\end{equation*}

\noindent\textbf{Step 2.} Multiplying the above equation by $F$, we obtain
\begin{equation*}
\frac{1}{2} \partial_{t} \|F\|^{2} - \left\langle \mathcal{S}F, F \right\rangle =0.
\end{equation*}
We define the frequency function by
\[
\mathcal{N}=\frac{\langle-\mathcal{S} F, F\rangle}{\|F\|^{2}}
.\]
Then 
\begin{equation*}
\frac{1}{2} \partial_{t} \|F\|^{2} + \mathcal{N} \|F\|^{2}  =0.
\end{equation*}
The derivative of $\mathcal{N}$ satisfies
\begin{equation*}
\begin{array}[c]{ll}
\dfrac{\d}{\d t} \mathcal{N} & \leq \dfrac{1}{\|F\|^{2}}\bigg( \left\langle-\left(\mathcal{S}^{\prime}+[\mathcal{S}, \mathcal{A}]\right) F, F\right\rangle- \partial_{x}f(b,t)\left[\mathcal{A}_{1}f(b,t)-\mathcal{A}_{3}f(b,t)\right] \nonumber\\
 &-\partial_{x}f(a,t)\left[\mathcal{A}_{2}f(a,t)-\mathcal{A}_{1}f(a,t)\right]-\partial_{x}\Phi(a,t)f(a,t)\left[\mathcal{S}_{1}f(a,t)-\mathcal{S}_{2}f(a,t)\right]\\
 &-\partial_{x}\Phi(b,t)f(b,t)\left[\mathcal{S}_{3}f(b,t)-\mathcal{S}_{1}f(b,t)\right] \bigg),
\end{array}
\end{equation*}
where
$ \mathcal{S}F =\left(\begin{array}{l} \mathcal{S}_{1} f \\  \mathcal{S}_{2} f \left( a, . \right)\\\mathcal{S}_{3} f \left( b, . \right)\end{array}\right)$, $ \mathcal{A}F =\left(\begin{array}{l} \mathcal{A}_{1} f \\  \mathcal{A}_{2} f\left(a,.\right)\\\mathcal{A}_{3} f\left(b,.\right)
\end{array}\right)$and $ [\mathcal{S}, \mathcal{A}] F = \mathcal{S}  \mathcal{A} F - \mathcal{A} \mathcal{S} F$.

\noindent Indeed,
\begin{align}\label{2.17}
\frac{\d}{\d t} \mathcal{N} &= \partial_{t}\left(\frac{\langle-\mathcal{S} F, F\rangle}{\|F\|^{2}} \right) \nonumber \\
&=\frac{1}{\|F\|^{4}}\left(\partial_{t}\left(\langle-\mathcal{S} F, F\rangle\right)\|F\|^{2}+\langle\mathcal{S} F, F\rangle \partial_{t}\|F\|^{2}\right)\nonumber \\
&=\frac{1}{\|F\|^{2}}\left[\left\langle-\mathcal{S}^{\prime} F, F\right\rangle-2\left\langle\mathcal{S} F, F^{\prime}\right\rangle\right]+\frac{2}{\|F\|^{4}}\langle\mathcal{S} F, F\rangle^{2}\nonumber \\
&=\frac{1}{\|F\|^{2}}\left[\left\langle-\mathcal{S}^{\prime} F, F\right\rangle-2\langle\mathcal{S} F, \mathcal{A} F\rangle\right]+\frac{2}{\|F\|^{4}}\left[-\|\mathcal{S} F\|^{2}\|F\|^{2}+\langle\mathcal{S} F, F\rangle^{2}\right]\nonumber\\
&\leq \frac{1}{\|F\|^{2}}\left[\left\langle-\mathcal{S}^{\prime} F, F\right\rangle-2\langle\mathcal{S} F, \mathcal{A} F\rangle\right].
\end{align}
Next, we calculate $2 \langle\mathcal{S} F, \mathcal{A}
F\rangle$,
\begin{align*}
\langle\mathcal{S} F, \mathcal{A} F\rangle &= \int_{a}^b \left( \partial_{xx} f + \eta f\right) \mathcal{A}_{1}f \mathrm{d} x + \partial_{x}f(a,t)\mathcal{A}_{2}f(a,t)+\frac{1}{2}\partial_{t}\Phi(a,t) f(a,t)\mathcal{A}_{2}f(a,t)\\
& - \partial_{x}f(b,t)\mathcal{A}_{3}f(b,t)+\frac{1}{2}\partial_{t}\Phi(b,t) f(b,t)\mathcal{A}_{3}f(b,t)\\
&=\displaystyle\partial_{x}f(b,t)\mathcal{A}_{1}f(b,t)-\partial_{x}f(a,t)\mathcal{A}_{1}f(a,t)-\int_{a}^b \partial_{x}f \partial_{x} \mathcal{A}_{1}f\mathrm{d}x\\
&+\int_{a}^b \eta f \mathcal{A}_{1}f\mathrm{d}x+\partial_{x}f(a,t)\mathcal{A}_{2}f(a,t)+\frac{1}{2}\partial_{t}\Phi(a,t) f(a,t) \mathcal{A}_{2}f(a,t)\\
&-\partial_{x}f(b,t)\mathcal{A}_{3}f(b,t)+\frac{1}{2}\partial_{t}\Phi(b,t) f(b,t)\mathcal{A}_{3}f(b,t)\\
&= \partial_{x}f(b,t)\mathcal{A}_{1}f(b,t)-\partial_{x}f(a,t)\mathcal{A}_{1}f(a,t)-f(b,t)\partial_{x}\mathcal{A}_{1}f(b,t)\\
&+ f(a,t)\partial_{x}\mathcal{A}_{1}f(a,t)+\int_{a}^b f\partial_{xx}\mathcal{A}_{1}f \mathrm{d}x+\int_{a}^b \eta f \mathcal{A}_{1}f\mathrm{d}x\\
&+\partial_{x}f(a,t)\mathcal{A}_{2}f(a,t)+\frac{1}{2}\partial_{t}\Phi(a,t) f(a,t) \mathcal{A}_{2}f(a,t)-\partial_{x}f(b,t)\mathcal{A}_{3}f(b,t)\\
&+\frac{1}{2}\partial_{t}\Phi(b,t) f(b,t)\mathcal{A}_{3}f(b,t)\\
& =\langle\mathcal{S}\mathcal{A} F, F\rangle+\partial_{x}f (b,t)\mathcal{A}_{1}f(b,t)-\partial_{x}f(a,t) \mathcal{A}_{1}f(a,t)\\
&+\partial_{x}f(a,t)\mathcal{A}_{2}f(a,t)-\partial_{x}f(b,t) \mathcal{A}_{3}f(b,t).
\end{align*}
Therefore, we obtain
\begin{align}\label{3.15}
\langle\mathcal{S} F, \mathcal{A} F\rangle& = \langle\mathcal{S}  \mathcal{A} F, F\rangle + \partial_{x}f(b,t)\left[\mathcal{A}_{1}f(b,t)-\mathcal{A}_{3}f(b,t)\right]\nonumber\\
&+\partial_{x}f(a,t)\left[\mathcal{A}_{2}f(a,t)-\mathcal{A}_{1}f(a,t)\right].
\end{align}
Similarly, we show that 
\begin{align}\label{3.16}
\langle\mathcal{S} F, \mathcal{A} F\rangle &= - \langle\mathcal{A} \mathcal{S}   F, F\rangle +\partial_{x}\Phi(a,t)f(a,t)\left[\mathcal{S}_{1}f(a,t)-\mathcal{S}_{2}f(a,t)\right]\nonumber\\
&+\partial_{x}\Phi(b,t)f(b,t)\left[\mathcal{S}_{3}f(b,t)-\mathcal{S}_{1}f(b,t)\right].
\end{align}
Combining \eqref{3.15} and \eqref{3.16} yields
\begin{align}\label{2.20}
 2 \langle\mathcal{S} F, \mathcal{A} F\rangle =& \langle\mathcal{S}  \mathcal{A} F, F\rangle-\langle\mathcal{A}  \mathcal{S} F, F\rangle   + \partial_{x}f(b,t)\left[\mathcal{A}_{1}f(b,t)-\mathcal{A}_{3}f(b,t)\right] \nonumber\\
 & \hspace{-0.7cm} +\partial_{x}f(a,t)\left[\mathcal{A}_{2}f(a,t)-\mathcal{A}_{1}f(a,t)\right]+\partial_{x}\Phi(a,t)f(a,t)\left[\mathcal{S}_{1}f(a,t)-\mathcal{S}_{2}f(a,t)\right]\\\nonumber
 &+\partial_{x}\Phi(b,t)f(b,t)\left[\mathcal{S}_{3}f(b,t)-\mathcal{S}_{1}f(b,t)\right].
 \end{align}
Then, \eqref{2.17} and \eqref{2.20} imply the desired formula.

\noindent\textbf{Step 3.} The following identity holds:
\begin{align}\label{9.1}
&\left\langle-\left(\mathcal{S}^{\prime}+[\mathcal{S}, \mathcal{A}]\right) F, F\right\rangle- \partial_{x}f(b,t)\left[\mathcal{A}_{1}f(b,t)-\mathcal{A}_{3}f(b,t)\right] \nonumber\\
 &-\partial_{x}f(a,t)\left[\mathcal{A}_{2}f(a,t)-\mathcal{A}_{1}f(a,t)\right]-\partial_{x}\Phi(a,t)f(a,t)\left[\mathcal{S}_{1}f(a,t)-\mathcal{S}_{2}f(a,t)\right]\nonumber\\
 &-\partial_{x}\Phi(b,t)f(b,t)\left[\mathcal{S}_{3}f(b,t)-\mathcal{S}_{1}f(b,t)\right]\nonumber\\
&= \frac{-s(2-s)^{2}}{4\Upsilon^{3}}\int_{a}^{b}\varphi\lvert f \rvert^{2} \mathrm{d}x + \frac{s}{\Upsilon}\int_{a}^{b}\lvert \partial_{x}f \rvert^{2} \mathrm{d}x + \frac{s}{2\Upsilon}(2\partial_{x}\varphi(b)-1) \partial_{x}f(b,t)f(b,t)\\
&+ \frac{s}{2\Upsilon}(2\partial_{x}\varphi(a)+1) \partial_{x}f(a,t)f(a,t) - \frac{s}{\Upsilon^{3}} \varphi(a)\lvert f(a,t) \rvert^{2} - \frac{s}{\Upsilon^{3}} \varphi(b)\lvert f(b,t) \rvert^{2}\nonumber\\
&+\frac{s}{\Upsilon} \partial_{x}\varphi(b)\lvert\partial_{x} f(b,t) \rvert^{2}
- \frac{s}{\Upsilon}\partial_{x}\varphi(a)\lvert\partial_{x} f(a,t) \rvert^{2} + \frac{s^{3}}{4\Upsilon^{3}}\varphi(a)\partial_{x}\varphi(a)\lvert f(a,t) \rvert^{2}\nonumber\\
&- \frac{s^{3}}{4\Upsilon^{3}}\varphi(b)\partial_{x}\varphi(b)\lvert f(b,t) \rvert^{2} \nonumber.
\end{align}

\noindent Indeed,
\begin{align*}
\mathcal{S}\mathcal{A}F =  \begin{pmatrix}
\partial_{xx}\left(-\partial_{x} \Phi \partial_{x} f - \frac{1}{2}\partial_{xx}\Phi f \right)-\eta\partial_{x} \Phi\partial_{x} f -\frac{1}{2}\eta\partial_{xx}\Phi f \\[3mm]
{\partial_{x}}_{|x=a}\left(-\partial_{x}\Phi\partial_{x}f-\frac{1}{2}\partial_{xx}\Phi f\right)-\frac{1}{4}\partial_{t}\Phi(a,t)\partial_{x}\Phi(a,t)f(a,t)\\[3mm]
-{\partial_{x}}_{|x=b}\left(-\partial_{x}\Phi\partial_{x}f-\frac{1}{2}\partial_{xx}\Phi f\right)+\frac{1}{4}\partial_{t}\Phi(b,t)\partial_{x}\Phi(b,t)f(b,t)
\end{pmatrix},
\end{align*}
and
\begin{align*}
\mathcal{A}\mathcal{S}F =  \begin{pmatrix}
-\partial_{x}\Phi\partial_{x}\left(\partial_{xx}f\right)-\partial_{x}\Phi\partial_{x}\eta f-\eta \partial_{x}\Phi\partial_{x}f-\frac{1}{2}\partial_{xx}\Phi\partial_{xx}f-\frac{1}{2}\eta \partial_{xx}\Phi f \\[3mm]
-\frac{1}{2}\partial_{x}\Phi(a,t)\partial_{x}f(a,t)-\frac{1}{4}\partial_{t}\Phi(a,t)\partial_{x}\Phi (a,t)f(a,t)\\[3mm]
-\frac{1}{2}\partial_{x}\Phi(b,t)\partial_{x}f(b,t)+\frac{1}{4}\partial_{t}\Phi(b,t)\partial_{x}\Phi (b,t)f(b,t)
\end{pmatrix}.
\end{align*}
Also,
\begin{align*}
        \mathcal{S}^{\prime}F &=\left(\mathcal{S}F\right)^{\prime} - \mathcal{S} F^{\prime} \notag\\
&= \begin{pmatrix}
\partial_{xx}\left(\partial_{t}f\right)+\partial_{t}\eta f+\eta \partial_{t} f \\[3mm]
\partial_{t}\partial_{x}f(a,t) +\frac{1}{2}\partial_{tt} \Phi(a,t) f(a,t)+\frac{1}{2}\partial_{t}\Phi \partial_{t}f(a,t)\\[3mm]
-\partial_{t}\partial_{x}f(b,t) +\frac{1}{2}\partial_{tt} \Phi(b,t) f(b,t)+\frac{1}{2}\partial_{t}\Phi(b,t) \partial_{t}f(b,t)
\end{pmatrix} - \begin{pmatrix}
\partial_{xx}\partial_{t}f+\eta \partial_{t}f \\[3mm]  
\partial_{x}\partial_{t}f(a,t)+\frac{1}{2}\partial_{t}\Phi(a,t)
\partial_{t}f(a,t)\\[3mm]
-\partial_{x}\partial_{t}f(b,t)+\frac{1}{2}\partial_{t}\Phi(b,t)\partial_{t}f(b,t)\end{pmatrix} \notag\\
&= \begin{pmatrix}
\partial_{t}\eta f \\[3mm] \frac{1}{2}\partial_{tt}\Phi(a,t) f(a,t) \\[3mm] \frac{1}{2}\partial_{tt}\Phi(b,t) f(b,t)
\end{pmatrix}.
\end{align*}
Then,
\begin{equation*}\label{2.25}
\begin{aligned}
-\left(\mathcal{S}^{\prime}+[\mathcal{S}, \mathcal{A}]\right) F = \begin{pmatrix}
-\partial_{t}\eta f +\partial_{xx}\left(\partial_{x} \Phi \partial_{x} f + \frac{1}{2}\partial_{xx}\Phi f \right)-\partial_{x}\Phi\partial_{xxx} f -\partial_{x}\Phi\partial_{x}\eta f -\frac{1}{2}\partial_{xx}\Phi\partial_{xx} f \\[3mm]
-\frac{1}{2}\partial_{tt}\Phi(a,t) f(a,t)+{\partial_{x}}_{|x=a}\left(\partial_{x} \Phi \partial_{x} f + \frac{1}{2}\partial_{xx}\Phi f \right)-\frac{1}{2}\partial_{x}\Phi(a,t) \partial_{x}f(a,t)\\[3mm]
-\frac{1}{2}\partial_{tt}\Phi(b,t) f(b,t)-{\partial_{x}}_{|x=b}\left(\partial_{x} \Phi \partial_{x} f + \frac{1}{2}\partial_{xx}\Phi f \right)-\frac{1}{2}\partial_{x}\Phi(b,t) \partial_{x}f(b,t)
\end{pmatrix}.
\end{aligned}
\end{equation*}
Hence,
\begin{align*}
& \left\langle-\left(\mathcal{S}^{\prime}+[\mathcal{S}, \mathcal{A}]\right) F, F\right\rangle- \partial_{x}f(b,t)\left[\mathcal{A}_{1}f(b,t)-\mathcal{A}_{3}f(b,t)\right] \nonumber\\
 &-\partial_{x}f(a,t)\left[\mathcal{A}_{2}f(a,t)-\mathcal{A}_{1}f(a,t)\right]-\partial_{x}\Phi(a,t)f(a,t)\left[\mathcal{S}_{1}f(a,t)-\mathcal{S}_{2}f(a,t)\right]\\
 &-\partial_{x}\Phi(b,t)f(b,t)\left[\mathcal{S}_{3}f(b,t)-\mathcal{S}_{1}f(b,t)\right]\\
 &=\int_{a}^b-\partial_{t}\eta |f|^2\mathrm{d}x+\int_{a}^b\partial_{xx}\left( \partial_{x} \Phi\partial_{x} f+\dfrac{1}{2}\partial_{xx} \Phi f\right)f\mathrm{d}x-\int_{a}^b\partial_{x}\Phi\partial_{xxx} f f\mathrm{d}x\\
&-\int_{a}^b\partial_{x}\Phi\partial_{x} \eta |f|^2\mathrm{d}x-\dfrac{1}{2}\int_{a}^b\partial_{xx} \Phi \partial_{xx} f f\mathrm{d}x-\frac{1}{2}\partial_{tt}\Phi(a,t) |f(a,t)|^2\\
&+{\partial_{x}}_{|x=a}\left(\partial_{x}\Phi\partial_{x}f+\frac{1}{2}\partial_{xx}\Phi f\right)f(a,t)-\frac{1}{2}\partial_{x}\Phi(a,t)\partial_{x}f(a,t)f(a,t)-\frac{1}{2}\partial_{tt}\Phi(b,t)|f(b,t)|^2\\
&-{\partial_{x}}_{|x=b}\left(\partial_{x}\Phi\partial_{x}f+\frac{1}{2}\partial_{xx}\Phi f\right)f(b,t)-\frac{1}{2}\partial_{x}\Phi(b,t)\partial_{x}f(b,t)f(b,t)\\
&+\partial_{x}\Phi(b,t)\left\lvert \partial_{x}f(b,t)\right\lvert^2+\frac{1}{2}\partial_{xx}\Phi(b,t)\partial_{x}f(b,t)f(b,t)+\frac{1}{2}\partial_{x}\Phi(b,t)\partial_{x}f(b,t)f(b,t)\\
&+\frac{1}{2}\partial_{x}\Phi(a,t)\partial_{x}f(a,t) f(a,t)-\partial_{x}\Phi(a,t)\left\lvert\partial_{x}f(a,t)\right\lvert^2-\frac{1}{2}\partial_{xx}\Phi(a,t)\partial_{x}f(a,t) f(a,t)\\
&-\partial_{x}\Phi(a,t)\partial_{xx}f(a,t) f(a,t)-\eta(a,t)\partial_{x}\Phi(a,t)|f(a,t)|^2+\partial_{x}\Phi(a,t)\partial_{x}f(a,t)f(a,t)\\
&+\frac{1}{2}\partial_{t}\Phi(a,t)\partial_{x}\Phi(a,t)|f(a,t)|^2+\partial_{x}\Phi(b,t)\partial_{x}f(b,t)f(b,t)-\frac{1}{2}\partial_{t}\Phi(b,t)\partial_{x}\Phi(b,t)\left\lvert f(b,t)\right\lvert^2\\
&+\partial_{x}\Phi(b,t)\partial_{xx}f(b,t)f(b,t)+\eta(b,t)\partial_{x}\Phi(b,t) \left\lvert f(b,t)\right\lvert^2.
\end{align*}
Using integration by parts, we obtain that
\begin{align*}
& \left\langle-\left(\mathcal{S}^{\prime}+[\mathcal{S}, \mathcal{A}]\right) F, F\right\rangle- \partial_{x}f(b,t)\left[\mathcal{A}_{1}f(b,t)-\mathcal{A}_{3}f(b,t)\right]\\
&-\partial_{x}f(a,t)\left[\mathcal{A}_{2}f(a,t)-\mathcal{A}_{1}f(a,t)\right]-\partial_{x}\Phi(a,t)f(a,t)\left[\mathcal{S}_{1}f(a,t)-\mathcal{S}_{2}f(a,t)\right]\\
&-\partial_{x}\Phi(b,t)f(b,t)\left[\mathcal{S}_{3}f(b,t)-\mathcal{S}_{1}f(b,t)\right]\\
&=\int_{a}^b -\partial_{t}\eta |f|^2\mathrm{d}x+{\partial_{x}}_{|x=b}\left(\partial_{x}\Phi \partial_{x}f+\frac{1}{2}\partial_{xx}\Phi f\right)f(b,t)\\
&-{\partial_{x}}_{|x=a}\left(\partial_{x}\Phi \partial_{x}f+\frac{1}{2}\partial_{xx}\Phi f\right)f(a,t)-\int_{a}^b \partial_{x}\left(\partial_{x}\Phi \partial_{x}f+\frac{1}{2}\partial_{xx}\Phi f\right)\partial_{x}f\mathrm{d}x\\
&-\partial_{x}\Phi(b,t)\partial_{xx}f(b,t)f(b,t)+\partial_{x}\Phi(a,t)\partial_{xx}f(a,t)f(a,t)+\int_{a}^b\partial_{xx}\Phi\partial_{xx}f f\mathrm{d}x\\
&+\frac{1}{2}\int_{a}^b \partial_{x}\Phi \partial_{x}|\partial_{x}f|^2\mathrm{d}x-\int_{a}^b \partial_{x}\Phi \partial_{x}\eta |f|^2\mathrm{d}x-\frac{1}{2}\partial_{xx}\Phi(b,t)\partial_{x}f(b,t) f(b,t)
\end{align*}
\begin{align*}
&+\frac{1}{2}\partial_{xx}\Phi(a,t)\partial_{x}f(a,t) f(a,t)+\frac{1}{4}\int_{a}^b \partial_{xxx}\Phi\partial_{x}|f|^2 \mathrm{d}x+\frac{1}{2}\int_{a}^b \partial_{xx}\Phi |\partial_{x}f|^2\mathrm{d}x\\
& -\frac{1}{2}\partial_{tt}\Phi(a,t) |f(a,t)|^2+{\partial_{x}}_{|x=a}\left(\partial_{x}\Phi\partial_{x}f+\frac{1}{2}\partial_{xx}\Phi f\right)f(a,t)\\
&-\frac{1}{2}\partial_{tt}\Phi(b,t)|f(b,t)|^2-{\partial_{x}}_{|x=b}\left(\partial_{x}\Phi\partial_{x}f+\frac{1}{2}\partial_{xx}\Phi f\right)f(b,t)\\
&+\partial_{x}\Phi(b,t)\left\lvert \partial_{x}f(b,t)\right\lvert^2+\frac{1}{2}\partial_{xx}\Phi(b,t)\partial_{x}f(b,t)f(b,t)+\partial_{x}\Phi(b,t)\partial_{x}f(b,t)f(b,t)\\
&+\partial_{x}\Phi(a,t)\partial_{x}f(a,t) f(a,t)-\partial_{x}\Phi(a,t)\left\lvert\partial_{x}f(a,t)\right\lvert^2-\frac{1}{2}\partial_{xx}\Phi(a,t)\partial_{x}f(a,t) f(a,t)\\
&-\partial_{x}\Phi(a,t)\partial_{xx}f(a,t) f(a,t)-\eta(a,t)\partial_{x}\Phi(a,t)|f(a,t)|^2+\frac{1}{2}\partial_{t}\Phi(a,t)\partial_{x}\Phi(a,t)|f(a,t)|^2\\
&-\frac{1}{2}\partial_{t}\Phi(b,t)\partial_{x}\Phi(b,t)\left\lvert f(b,t)\right\lvert^2+\partial_{x}\Phi(b,t)\partial_{xx}f(b,t)f(b,t)+\eta(b,t)\partial_{x}\Phi(b,t) \left\lvert f(b,t)\right\lvert^2\\
&=\int_{a}^b -\partial_{t}\eta |f|^2\mathrm{d}x-\int_{a}^b \partial_{xx}\Phi |\partial_{x}f|^2\mathrm{d}x-\frac{1}{2}\int_{a}^b \partial_{x}\Phi \partial_{x}|\partial_{x}f|^2\mathrm{d}x\\
&-\frac{1}{4}\int_{a}^b \partial_{xxx}\Phi \partial_{x}|f|^2\mathrm{d}x-\frac{1}{2}\int_{a}^b \partial_{xx}\Phi |\partial_{x}f|^2\mathrm{d}x+\partial_{xx}\Phi(b,t)\partial_{x}f(b,t) f(b,t)\\
&-\partial_{xx}\Phi(a,t)\partial_{x}f(a,t) f(a,t)-\frac{1}{2}\int_{a}^b \partial_{xxx}\Phi \partial_{x}|f|^2\mathrm{d}x-\int_{a}^b \partial_{xx}\Phi |\partial_{x}f|^2\mathrm{d}x\\
&+\frac{1}{2}\int_{a}^b \partial_{x}\Phi \partial_{x}|\partial_{x}f|^2\mathrm{d}x-\int_{a}^b \partial_{x}\Phi \partial_{x}\eta |f|^2\mathrm{d}x+\frac{1}{2}\int_{a}^b \partial_{xx}\Phi |\partial_{x}f|^2\mathrm{d}x\\
&-\frac{1}{2}\partial_{tt}\Phi(a,t)|f(a,t)|^2-\frac{1}{2}\partial_{tt}\Phi(b,t)|f(b,t)|^2+\partial_{x}\Phi(b,t)|\partial_{x}f(b,t)|^2-\partial_{x}\Phi(a,t)|\partial_{x}f(a,t)|^2\\
&+\partial_{x}\Phi(b,t) \partial_{x}f(b,t) f(b,t)+\partial_{x}\Phi(a,t)\partial_{x}f(a,t) f(a,t)-\eta(a,t)\partial_{x}\Phi(a,t)|f(a,t)|^2\\
&+\frac{1}{2}\partial_{t}\Phi(a,t)\partial_{x}\Phi(a,t)|f(a,t)|^2-\frac{1}{2}\partial_{t}\Phi(b,t) \partial_{x}\Phi(b,t)|f(b,t)|^2\\
&+\eta(b,t)\partial_{x}\Phi(b,t)|f(b,t)|^2\\
&=\int_{a}^b -\partial_{t}\eta |f|^2\mathrm{d}x-2\int_{a}^b \partial_{xx}\Phi |\partial_{x}f|^2\mathrm{d}x-\int_{a}^b \partial_{x}\Phi \partial_{x}\eta |f|^2\mathrm{d}x\\
&+\partial_{xx}\Phi(b,t) \partial_{x}f(b,t) f(b,t)-\partial_{xx}\Phi(a,t)\partial_{x}f(a,t) f(a,t)-\frac{1}{2}\partial_{tt}\Phi(a,t)|f(a,t)|^2\\
&-\frac{1}{2}\partial_{tt}\Phi(b,t)|f(b,t)|^2+\partial_{x}\Phi(b,t)|\partial_{x}f(b,t)|^2-\partial_{x}\Phi(a,t)|\partial_{x}f(a,t)|^2\\
&+\partial_{x}\Phi(b,t)\partial_{x}f(b,t)f(b,t)+\partial_{x}\Phi(a,t)\partial_{x}f(a,t) f(a,t)-\eta(a,t) \partial_{x}\Phi(a,t)|f(a,t)|^2\\
&+\frac{1}{2}\partial_{t}\Phi(a,t)\partial_{x}\Phi(a,t)|f(a,t)|^2-\frac{1}{2}\partial_{t}\Phi(b,t)\partial_{x}\Phi(b,t)|f(b,t)|^2+\eta(b,t)\partial_{x}\Phi(b,t)|f(b,t)|^2.
\end{align*}
Using the fact that $$\Phi\left(x,t\right)= \frac{s \varphi(x)}{\Upsilon(t)},$$
we infer that
\begin{align*}
& \left\langle-\left(\mathcal{S}^{\prime}+[\mathcal{S}, \mathcal{A}]\right) F, F\right\rangle- \partial_{x}f(b,t)\left[\mathcal{A}_{1}f(b,t)-\mathcal{A}_{3}f(b,t)\right] \nonumber -\partial_{x}f(a,t)\left[\mathcal{A}_{2}f(a,t)-\mathcal{A}_{1}f(a,t)\right]\\
&-\partial_{x}\Phi(a,t)f(a,t)\left[\mathcal{S}_{1}f(a,t)-\mathcal{S}_{2}f(a,t)\right]-\partial_{x}\Phi(b,t)f(b,t)\left[\mathcal{S}_{3}f(b,t)-\mathcal{S}_{1}f(b,t)\right]\\
&=\frac{-s(2-s)}{2\Upsilon^{3}}\int_{a}^{b}\varphi\lvert f \rvert^{2} \mathrm{d}x + \frac{s}{\Upsilon}\int_{a}^{b}\lvert \partial_{x}f \rvert^{2} \mathrm{d}x + \frac{s^{2}(2-s)}{4\Upsilon^{3}}\int_{a}^{b}\varphi\lvert f \rvert^{2}\mathrm{d}x - \frac{s}{2\Upsilon}\partial_{x}f(b,t)f(b,t)\\
&+\frac{s}{2\Upsilon}\partial_{x}f(a,t)f(a,t) - \frac{s}{\Upsilon^{3}}\varphi(a)\lvert f(a,t) \rvert^{2} - \frac{s}{\Upsilon^{3}}\varphi(b)\lvert f(b,t) \rvert^{2} + \frac{s}{\Upsilon} \partial_{x}\varphi(b)\lvert\partial_{x} f(b,t) \rvert^{2}\\
&- \frac{s}{\Upsilon} \partial_{x}\varphi(a)\lvert\partial_{x} f(a,t) \rvert^{2} + \frac{s}{\Upsilon} \partial_{x}\varphi(b)\partial_{x} f(b,t)f(b,t)+ \frac{s}{\Upsilon} \partial_{x}\varphi(a)\partial_{x} f(a,t)f(a,t)\\
&- \frac{s^2 (2-s)}{4 \Upsilon^{3}} \varphi(a)\partial_{x}\varphi(a,t)\lvert f(a,t)\rvert^{2} + \frac{s^2}{2\Upsilon^{3}}\varphi(a)\partial_{x}\varphi(a) \lvert f(a,t)\rvert^{2} - \frac{s^2}{2\Upsilon^{3}}\varphi(b)\partial_{x}\varphi(b) \lvert f(b,t)\rvert^{2}\\
&+ \frac{s^2 (2-s)}{4 \Upsilon^{3}} \varphi(b)\partial_{x}\varphi(b,t)\lvert f(b,t)\rvert^{2}\\
&=\frac{-s(2-s)^{2}}{4\Upsilon^{3}}\int_{a}^{b}\varphi\lvert f \rvert^{2} \mathrm{d}x + \frac{s}{\Upsilon}\int_{a}^{b}\lvert \partial_{x}f \rvert^{2} \mathrm{d}x + \frac{s}{2\Upsilon}(2\partial_{x}\varphi(b)-1) \partial_{x}f(b,t)f(b,t)\\
&+ \frac{s}{2\Upsilon}(2\partial_{x}\varphi(a)+1) \partial_{x}f(a,t)f(a,t) - \frac{s}{\Upsilon^{3}} \varphi(a)\lvert f(a,t) \rvert^{2} - \frac{s}{\Upsilon^{3}} \varphi(b)\lvert f(b,t) \rvert^{2}\\
&+\frac{s}{\Upsilon} \partial_{x}\varphi(b)\lvert\partial_{x} f(b,t) \rvert^{2}
- \frac{s}{\Upsilon}\partial_{x}\varphi(a)\lvert\partial_{x} f(a,t) \rvert^{2} + \frac{s^{3}}{4\Upsilon^{3}}\varphi(a)\partial_{x}\varphi(a)\lvert f(a,t) \rvert^{2}\\
&- \frac{s^{3}}{4\Upsilon^{3}}\varphi(b)\partial_{x}\varphi(b)\lvert f(b,t) \rvert^{2}.
\end{align*}
Therefore, we obtain the desired equality \eqref{9.1}.

\noindent\textbf{Step 4.}
For any $\hbar\in(0,1]$ and $ 0<s\leq \min\left(\dfrac{2}{\sqrt{x_{0}-a}},\dfrac{2}{\sqrt{b-x_{0}}} ,1\right) ,$ we prove that
\begin{align}\label{14.11}
& \left\langle-\left(\mathcal{S}^{\prime}+[\mathcal{S}, \mathcal{A}]\right) F, F\right\rangle- \partial_{x}f(b,t)\left[\mathcal{A}_{1}f(b,t)-\mathcal{A}_{3}f(b,t)\right] \nonumber\\\nonumber
 &-\partial_{x}f(a,t)\left[\mathcal{A}_{2}f(a,t)-\mathcal{A}_{1}f(a,t)\right]-\partial_{x}\Phi(a,t)f(a,t)\left[\mathcal{S}_{1}f(a,t)-\mathcal{S}_{2}f(a,t)\right]\\\nonumber
 &-\partial_{x}\Phi(b,t)f(b,t)\left[\mathcal{S}_{3}f(b,t)-\mathcal{S}_{1}f(b,t)\right]\\
 &\leq (1+C_{0}) \left< -SF,F \right> + \frac{C}{\hbar^{2}} \|F\|^{2},
 \end{align}
where $$C=\max \left(\frac{(\partial_{x}\varphi(b)-1)^2}{4(b-x_{0})},\frac{(\partial_{x}\varphi(a)+1)^2}{4(x_{0}-a)}\right),\;
 C_{0} = 1 - \min\left(s,\frac{s^{2}}{4}(x_{0}-a),\frac{s^{2}}{4}(b-x_{0}) \right) \in (0,1).$$
Indeed, we have
\begin{align*}
\left \langle\mathcal{S}F,F\right \rangle &= -\int_{a}^{b} \lvert\partial_{x} f(x,t) \rvert^{2} \mathrm{d}x + \frac{s(2-s)}{4\Upsilon^{2}} \int_{a}^{b}\varphi (x) \lvert f(x,t) \rvert^{2} \mathrm{d}x\\
&+ \frac{s}{2\Upsilon^{2}}\varphi(a) \lvert f(a,t) \rvert^{2} + \frac{s}{2\Upsilon^{2}} \varphi(b) \lvert f(b,t) \rvert^{2}.
\end{align*}
Then
\begin{align}\label{11.1}
&\dfrac{1}{\Upsilon}\left\langle-\mathcal{S}F,F\right\rangle=\frac{1}{\Upsilon}\int_{a}^b |\partial_{x}f|^2\mathrm{d}x+\frac{s(2-s)}{4\Upsilon^3}\int_{a}^b\left( -\varphi\right) |f|^2\mathrm{d}x\nonumber\\
&+\frac{s}{2\Upsilon^3}\left(-\varphi(a)\right)|f(a,t)|^2+\frac{s}{2\Upsilon^3}\left(-\varphi(b)\right)|f(b,t)|^2.
\end{align}
Next, we estimate each term appearing in equality \eqref{9.1}. For $ 0<s\leq \min\left(\dfrac{2}{\sqrt{x_{0}-a}},\dfrac{2}{\sqrt{b-x_{0}}} ,1\right) ,$ we have
\begin{equation}\label{12.1}
    \frac{-s(2-s)^{2}}{4\Upsilon^{3}}\int_{a}^{b}\varphi\lvert f \rvert^{2} \mathrm{d}x =(2-s)\left[\frac{s(2-s)}{4\Upsilon^{3}}\int_{a}^{b}\left(-\varphi\right)\lvert f \rvert^{2} \mathrm{d}x \right].
\end{equation}
Since $s\in (0,1)$, we have
\begin{equation}\label{13.1}
    \frac{s}{\Upsilon}\int_{a}^{b}\lvert \partial_{x}f \rvert^{2} \mathrm{d}x \leq \frac{1}{\Upsilon}\int_{a}^{b}\lvert \partial_{x}f \rvert^{2} \mathrm{d}x,
\end{equation}
and
\begin{align}\label{14.1}
  &\frac{-s}{\Upsilon^{3}}\varphi(a)\lvert f(a,t) \rvert^{2}+  \frac{s^{3}}{4\Upsilon^{3}}\varphi(a)\partial_{x}\varphi(a)\lvert f(a,t) \rvert^{2}\nonumber\\
  &= \left(2 - \frac{s^{2}}{2} \partial_{x}\varphi(a) \right) \left[ \frac{s}{2\Upsilon^{3}} \left(-\varphi(a)\right) \lvert f(a,t) \rvert^{2}\right]\nonumber\\
  &= \left(2 - \frac{s^{2}}{4} (x_{0}-a) \right) \left[ \frac{s}{2\Upsilon^{3}} \left(-\varphi(a)\right) \lvert f(a,t) \rvert^{2}\right].
\end{align}
In the same manner we obtain that
\begin{align}\label{15.1}
  &\frac{-s}{\Upsilon^{3}}\varphi(b)\lvert f(b,t) \rvert^{2}-  \frac{s^{3}}{4\Upsilon^{3}}\varphi(b)\partial_{x}\varphi(b)\lvert f(b,t) \rvert^{2}\nonumber\\
  &= \left(2 - \frac{s^{2}}{4} (b-x_{0}) \right) \left[ \frac{s}{2\Upsilon^{3}} \left(-\varphi(b)\right) \lvert f(b,t) \rvert^{2}\right].
\end{align}
Since $x_{0} \in (a,b)$, we have
\begin{align}\label{16.1}
\frac{s}{\Upsilon}\partial_{x}\varphi(b)\lvert\partial_{x} f(b,t) \rvert^{2} - \frac{s}{\Upsilon}\partial_{x}\varphi(a)\lvert\partial_{x} f(a,t) \rvert^{2}&= \frac{-s}{2\Upsilon}(b-x_{0})\lvert\partial_{x} f(b,t) \rvert^{2}+\frac{s}{2\Upsilon}(a-x_{0})\lvert\partial_{x} f(a,t) \rvert^{2}\le 0.
\end{align}
Furthermore, we have 
\begin{align*}
 \frac{s}{2\Upsilon}(2\partial_{x}\varphi(b)-1) \partial_{x}f(b,t)f(b,t) =   \frac{s}{2\Upsilon} \left[\left(\sqrt{b-x_{0}}\partial_{x}f(b,t) \right)\left(\frac{(2\partial_{x}\varphi(b)-1)}{\sqrt{b-x_{0}}} f(b,t)\right) \right].
\end{align*}
Using Young's inequality, we obtain
\begin{align}\label{17.1}
 \frac{s}{2\Upsilon}(2\partial_{x}\varphi(b)-1) \partial_{x}f(b,t)f(b,t) \leq  \frac{s}{4\Upsilon} (b-x_0) \lvert \partial_{x}f(b,t) \rvert^{2} + \frac{s(2\partial_{x}\varphi(b)-1)^{2}}{4\Upsilon( b-x_{0})} \lvert f(b,t) \rvert^{2}.
\end{align}
In the same manner, we obtain
\begin{align}\label{18.1}
 \frac{s}{2\Upsilon}(2\partial_{x}\varphi(a)+1) \partial_{x}f(a,t)f(a,t) \leq  \frac{s}{4\Upsilon} (x_0 - a) \lvert \partial_{x}f(a,t) \rvert^{2} + \frac{s(2\partial_{x}\varphi(a)-1)^{2}}{4\Upsilon( x_{0}-a)} \lvert f(a,t) \rvert^{2}.
\end{align}
Combining \eqref{9.1} and \eqref{11.1}-\eqref{18.1}, 
we infer that
\begin{align*}
& \left\langle-\left(\mathcal{S}^{\prime}+[\mathcal{S}, \mathcal{A}]\right) F, F\right\rangle- \partial_{x}f(b,t)\left[\mathcal{A}_{1}f(b,t)-\mathcal{A}_{3}f(b,t)\right] \nonumber\\
 &-\partial_{x}f(a,t)\left[\mathcal{A}_{2}f(a,t)-\mathcal{A}_{1}f(a,t)\right]-\partial_{x}\Phi(a,t)f(a,t)\left[\mathcal{S}_{1}f(a,t)-\mathcal{S}_{2}f(a,t)\right]\\
 &-\partial_{x}\Phi(b,t)f(b,t)\left[\mathcal{S}_{3}f(b,t)-\mathcal{S}_{1}f(b,t)\right]\\
 &\leq (2-s)\left[\frac{s(2-s)}{4\Upsilon^{3}}\int_{a}^{b}\left(-\varphi\right)\lvert f \rvert^{2} \mathrm{d}x \right] + \frac{1}{\Upsilon}\int_{a}^{b}\lvert \partial_{x}f \rvert^{2} \mathrm{d}x \\
 &+\left(2 - \frac{s^{2}}{4} (x_{0}-a) \right) \left[ \frac{s}{2\Upsilon^{3}} \left(-\varphi(a)\right) \lvert f(a,t) \rvert^{2}\right] + \left(2 - \frac{s^{2}}{4} (b-x_{0}) \right) \left[ \frac{s}{2\Upsilon^{3}} \left(-\varphi(b)\right) \lvert f(b,t) \rvert^{2}\right]\\
 &-\frac{s}{2\Upsilon} (b-x_0) \lvert \partial_{x}f(b,t) \rvert^{2} - \frac{s}{2\Upsilon} (x_0 -a) \lvert \partial_{x}f(a,t) \rvert^{2}\\
 &+\frac{s(2\partial_{x}\varphi(b)-1)^{2}}{4\Upsilon( b-x_{0})} \lvert f(b,t) \rvert^{2} + \frac{s(2\partial_{x}\varphi(a)+1)^{2}}{4\Upsilon( x_{0}-a)} \lvert f(a,t) \rvert^{2}.
 \end{align*}
Since $\hbar\in (0,1)$, we have $\dfrac{1}{\Upsilon} \leq \dfrac{1}{\hbar}\leq \dfrac{1}{\hbar^{2}}$. Then, for $C=\max \left(\dfrac{(\partial_{x}\varphi(b)-1)^2}{4(b-x_{0})},\dfrac{(\partial_{x}\varphi(a)+1)^2}{4(x_{0}-a)}\right) \text{ and}$
$$C_{0} = 1 - \min\left(s,\frac{s^{2}}{4}(x_{0}-a),\frac{s^{2}}{4}(b-x_{0}) \right) \in (0,1),$$
we obtain
\begin{align*}
& \left\langle-\left(\mathcal{S}^{\prime}+[\mathcal{S}, \mathcal{A}]\right) F, F\right\rangle- \partial_{x}f(b,t)\left[\mathcal{A}_{1}f(b,t)-\mathcal{A}_{3}f(b,t)\right] \nonumber\\
&-\partial_{x}f(a,t)\left[\mathcal{A}_{2}f(a,t)-\mathcal{A}_{1}f(a,t)\right]-\partial_{x}\Phi(a,t)f(a,t)\left[\mathcal{S}_{1}f(a,t)-\mathcal{S}_{2}f(a,t)\right]\\
&-\partial_{x}\Phi(b,t)f(b,t)\left[\mathcal{S}_{3}f(b,t)-\mathcal{S}_{1}f(b,t)\right]\\
&\leq (1+C_{0}) \Bigg[\frac{s(2-s)}{4\Upsilon^{3}}\int_{a}^{b}\left(-\varphi\right)\lvert f \rvert^{2} \mathrm{d}x + \frac{1}{\Upsilon}\int_{a}^{b}\lvert \partial_{x}f \rvert^{2} \mathrm{d}x \\
&+  \frac{s}{2\Upsilon^{3}} \left(-\varphi(a)\right) \lvert f(a,t) \rvert^{2} +  \frac{s}{2\Upsilon^{3}} \left(-\varphi(b)\right) \lvert f(b,t) \rvert^{2}\Bigg] +\frac{C}{\hbar^{2}}\left( \lvert f(b,t) \rvert^{2} +  \lvert f(a,t) \rvert^{2}\right)\\
&\leq (1+C_{0}) \left< -S,F \right> + \frac{C}{\hbar^{2}} \|F\|^{2}.
\end{align*}
 
\noindent\textbf{Step 5.} The following differential system holds
\begin{empheq}[left = \empheqlbrace]{alignat=2}
\begin{aligned}
&\frac{1}{2}\frac{\d}{\d t}\left\Vert F\left(  \cdot,t\right)
\right\Vert ^{2}+\mathcal{N}\left(t\right)  \left\Vert F\left(
\cdot,t\right)  \right\Vert ^{2}=0, & \\
&\frac{\d}{\d t}\mathcal{N}\left(  t\right)  \leq\frac
{1+C_{0}}{\Upsilon\left(t\right)  }\mathcal{N}\left(  t\right)+\frac{C}{\hbar^2}.
\end{aligned}
\end{empheq}
Using \cite{RBKDP}, Proposition 3, we infer, for any $0<t_{1}<t_{2}<t_{3}\leq T$, that
\[
\left(\left\Vert F\left(\cdot,t_{2}\right)  \right\Vert ^{2}\right)
^{1+M}\leq\left(  \left\Vert F\left(  \cdot,t_{1}\right)  \right\Vert
^{2}\right)  ^{M}\left\Vert F\left(  \cdot,t_{3}\right)  \right\Vert ^{2}\mathrm{e}^{D},%
\]
where%
\[
M=\dfrac{\displaystyle\int_{t_{2}}^{t_{3}}\dfrac{\mathrm{d}t}{(T-t+\hbar)^{1+C_{0}}} }{\displaystyle\int_{t_{1}}^{t_{2}}\dfrac{\mathrm{d}t}{(T-t+\hbar)^{1+C_{0}}}} \qquad \text{ and } \qquad D=2(1+M)(t_{3}-t_{1})^2 \frac{C}{\hbar^2}. %
\]
Thus, we obtain
\[
\left( \left\| U\left(  \cdot,t_{2}\right) \mathrm{e}^{\frac{\Phi\left(  \cdot,t_{2}\right)}{2}} \right\| ^{2}  \right)  ^{1+M}\leq\left(
\left\| U\left(  \cdot,t_{1}\right) \mathrm{e}^{\frac{\Phi\left(  \cdot,t_{1}\right)}{2}} \right\| ^{2}\right)^{M}\left\| U\left(  \cdot,t_{3}\right) \mathrm{e}^{\frac{\Phi\left(  \cdot,t_{3}\right)}{2}} \right\| ^{2} \mathrm{e}^D\text{ .}%
\]

\noindent\textbf{Step 6.} We take off the weight function $\Phi$ from the integrals
\begin{equation}\label{2.36}
\begin{array}
[c]{ll}%
\left(\|U\left(  \cdot,t_{2}\right) \|^{2}  \right)  ^{1+M}&\leq\exp\left[  -\left(  1+M\right)
\min\limits_{x\in [a,b]}\Phi\left(  x,t_{2}\right)
+M\max\limits_{x\in [a,b]}\Phi\left(x,t_{1}\right)
\right]
\\
& \quad \times\left(
\left\| U\left(  \cdot,t_{1}\right)  \right\| ^{2}\right)^{M}\left\|U\left(  \cdot,t_{3}\right) \mathrm{e}^{\frac{\Phi\left(  \cdot,t_{3}\right)}{2}} \right\|^{2} \mathrm{e}^D\text{ .}%
\end{array}
\end{equation}
Let $\omega \Subset (a,b)$ be a nonempty open subset. Then
\[
\begin{array}[c]{ll}
\left\lVert U\left(\cdot,t_{3}\right) \mathrm{e}^{\frac{\Phi\left(  \cdot,t_{3}\right)}{2}} \right\rVert ^{2}&= \displaystyle\int_{a}^b\left\vert u\left(  x,t_{3}\right)  \right\vert^{2}\mathrm{e}^{\Phi\left(  x,t_{3}\right)  } \d x+\left\vert u\left(  a,t_{3}\right)  \right\vert
^{2}\mathrm{e}^{\Phi\left(  a,t_{3}\right) }+\left\vert u\left(  b,t_{3}\right)  \right\vert
^{2}\mathrm{e}^{\Phi\left(  b,t_{3}\right) }\\
&= \displaystyle\int_{\omega}\left\vert u\left(  x,t_{3}\right)  \right\vert
^{2}\mathrm{e}^{\Phi\left(  x,t_{3}\right)  }\mathrm{d} x+\displaystyle\int_{\left.  (a,b)\right\backslash \omega}\left\vert u\left(  x,t_{3}\right)  \right\vert^{2}\mathrm{e}^{\Phi\left(  x,t_{3}\right)  }\mathrm{d}x\\
&\quad+\left\vert u\left(  a,t_{3}\right)  \right\vert
^{2}\mathrm{e}^{\Phi\left(  a,t_{3}\right) }+\left\vert u\left(  b,t_{3}\right)  \right\vert
^{2}\mathrm{e}^{\Phi\left(  b,t_{3}\right) }\\
& \leq\exp\left[ \max\limits_{x\in\overline{\omega}}
\Phi\left(  x,t_{3}\right)  \right]  \displaystyle\int_{\omega}\left\vert
u\left(  x,t_{3}\right)  \right\vert ^{2}\mathrm{d}x\\
&\quad+\exp\left[\max\limits_{x\in\overline{\left.  (a,b)
\right\backslash \omega}}\Phi\left(  x,t_{3}\right)  \right]
\displaystyle\int_{a}^b\left\vert u\left(  x,t_{3}\right)  \right\vert
^{2}\mathrm{d}x\\
&\quad+\left\vert u\left(  a,t_{3}\right)  \right\vert
^{2}\mathrm{e}^{\Phi\left(  a,t_{3}\right) }+\left\vert u\left(  b,t_{3}\right)  \right\vert
^{2}\mathrm{e}^{\Phi\left(  b,t_{3}\right) }.
\end{array}
\]
Since $\omega \Subset (a,b)$, then 
\[\exp\left[  \max\limits_{x\in\{a,b\}} \Phi\left(  x,t_{3}\right)  \right]\leq\exp\left[\max\limits_{x\in\overline{\left.  (a,b)
\right\backslash \omega}}\Phi\left(  x,t_{3}\right)  \right]. \]
Therefore,
\begin{equation}\label{2.37}
\left\lVert U\left(  \cdot,t_{3}\right) \mathrm{e}^{\frac{\Phi\left(  \cdot,t_{3}\right)}{2}} \right\rVert ^{2}\leq \exp\left[\max\limits_{x\in\overline{\omega}}\Phi\left(  x,t_{3}\right)  \right]  \displaystyle\int_{\omega}\left\vert
u\left(  x,t_{3}\right)  \right\vert ^{2}\mathrm{d}x+\exp\left[\max\limits_{x\in\overline{\left.  (a,b)\right\backslash \omega}}\Phi\left(  x,t_{3}\right)  \right]\| U(\cdot,t_{3})\|^2.
\end{equation}
Using \eqref{2.36}-\eqref{2.37}, we obtain
\[
\begin{array}
[c]{ll}%
\left( \| U\left(  \cdot,t_{2}\right)  \| ^{2}  \right)  ^{1+M}&\leq \mathrm{e}^D\exp\left[  -\left(  1+M\right)
\min\limits_{x\in[a,b]}\Phi\left(  x,t_{2}\right)
+M\max\limits_{x\in[a,b]}\Phi\left(  x,t_{1}\right)
+\max\limits_{x\in\overline{\omega}}\Phi\left(  x,t_{1}\right)\right]
\\
& \quad \times\left(
\| U\left(  \cdot,t_{1}\right)  \| ^{2}\right)  ^{M}\displaystyle\int_{\omega}\left\vert
u\left(  x,t_{3}\right)  \right\vert ^{2}\mathrm{d}x \\
&+ \mathrm{e}^D\exp\left[  -\left(  1+M\right)
\min\limits_{x\in[a,b]}\Phi\left(  x,t_{2}\right)
+M\max\limits_{x\in[a,b]}\Phi\left(  x,t_{1}\right)
+\max\limits_{x\in\overline{\left.  [a,b]\right\backslash \omega}}\Phi\left(  x,t_{1}\right)\right]\\
&\quad  \times\left(
\| U\left(  \cdot,t_{1}\right)  \| ^{2}\right)  ^{M}\|U(\cdot,t_{3})\|^2 .
\end{array}
\]
Using the fact that $\left\Vert U\left(  \cdot,T\right)  \right\Vert
\leq\left\Vert U\left(  \cdot,t\right)  \right\Vert \leq\left\Vert U\left(
\cdot,0\right)  \right\Vert ,$ $ 0<t<T$,\, the above inequality becomes%
\[%
\begin{array}
[c]{ll}%
\left(  \left\Vert U\left(  \cdot,T\right)  \right\Vert ^{2}\right)  ^{1+M} &
\leq \mathrm{e}^D\exp\left[  -\left(  1+M\right) \min\limits_{x\in[a,b]%
} \Phi\left(  x,t_{2}\right)  +M\max\limits_{x\in[a,b]%
}\Phi\left(  x,t_{1}\right)  +\max\limits_{x\in\overline{\omega}%
}\Phi\left(  x,t_{3}\right)  \right] \\
& \quad\times\left(  \left\Vert U\left(  \cdot,0\right)  \right\Vert
^{2}\right)  ^{M}\displaystyle\int_{\omega}\left\vert u\left(  x,t_{3}\right)
\right\vert ^{2}\mathrm{d}x\\
&  +\mathrm{e}^D\exp\left[  -\left(  1+M\right)  \min\limits_{x\in[a,b]}\Phi\left(  x,t_{2}\right)  +M\max\limits_{x\in[a,b]%
}\Phi\left(  x,t_{1}\right)  +\max\limits_{x\in\overline{\left.
(a,b)\right\backslash \omega}}\Phi\left(  x,t_{3}\right)  \right]
\\
& \quad\times\left(  \left\Vert U\left(  \cdot,0\right)  \right\Vert
^{2}\right)  ^{1+M}.%
\end{array}
\]
Since  $\Phi\left(  x,t\right)  =\displaystyle\frac{s\varphi\left(x\right)  }{T-t+\hbar}$, then
\[%
\begin{array}
[c]{ll}%
\left\Vert U\left(  \cdot,T\right)  \right\Vert ^{1+M} & \leq \mathrm{e}^D\exp%
\frac{s}{2}\left[  -\frac{1+M}{T-t_{2}+\hbar}\min\limits_{x\in[a,b]%
}\varphi\left(  x\right)  +\frac{M}{T-t_{1}+\hbar}\max\limits_
{x\in[a,b]}\varphi\left(  x\right)  +\frac{1}%
{T-t_{3}+\hbar}\max\limits_{x\in\overline{\omega}}\varphi\left(
x\right)  \right] \\
& \quad\times\left\Vert U\left(  \cdot,0\right)  \right\Vert ^{M}\left\Vert
u\left(  \cdot,t_{3}\right)  \right\Vert _{L^{2}\left(  \omega\right)  }\\
&+\mathrm{e}^D\exp\frac{s}{2}\left[  -\frac{1+M}{T-t_{2}+\hbar}\min\limits_
{x\in[a,b]}\varphi\left(  x\right)  +\frac{M}%
{T-t_{1}+\hbar}\max\limits_{x\in[a,b]}\varphi\left(
x\right)  +\frac{1}{T-t_{3}+\hbar}\max\limits_{x\in\overline{\left.
(a,b)\right\backslash \omega}} \varphi\left(  x\right)  \right] \\
& \quad\times\left\Vert U\left(  \cdot,0\right)  \right\Vert ^{1+M}\text{ .}%
\end{array}
\]

\noindent\textbf{Step 7.} We choose $t_{3}=T$, $t_{2}=T-\ell \hbar$, $t_{1}=T-2\ell \hbar$, and $\ell>1$ such that $0<2\ell
\hbar<T.$ Then%
\[%
\begin{array}
[c]{ll}%
\left\Vert u\left(  \cdot,T\right)  \right\Vert ^{1+M_{\ell}} & \leq
 \mathrm{e}^{D_{\ell}}\exp\frac{s}{2h}\left[  -\frac{1+M_{\ell}}{1+\ell}\min\limits_
{x\in[a,b]}\varphi\left(  x\right)  +\frac{M_{\ell}%
}{1+2\ell}\max\limits_{x\in[a,b]}\varphi\left(  x\right)
+\max\limits_{x\in\overline{\omega}}\varphi\left(  x\right)  \right]
\\
& \quad\times\left\Vert U\left(  \cdot,0\right)  \right\Vert ^{M_{\ell}%
}\left\Vert u\left(  \cdot,T\right)  \right\Vert _{L^{2}\left(  \omega\right)
}\\
& +\mathrm{e}^{D_{\ell}}\exp\frac{s}{2h}\left[  -\frac{1+M_{\ell}}{1+\ell}%
\min\limits_{x\in[a,b]}\varphi\left(  x\right)
+\frac{M_{\ell}}{1+2\ell}\max\limits_{x\in[a,b]}%
\varphi\left(  x\right)  +\max\limits_{x\in\overline{\left. (a,b)
\right\backslash \omega}}\varphi\left(  x\right)  \right] \\
& \quad\times\left\Vert U\left(  \cdot,0\right)  \right\Vert ^{1+M_{\ell}%
}\text{ ,}%
\end{array}
\]
where $M_\ell = \dfrac{(\ell+1)^{C_{0}}-1}{1-\left(\dfrac{\ell+1}{2\ell+1}\right)^{C_{0}}}$ and $D_{\ell}=2C\ell^2(1+M_{\ell})$. \\
Since $\varphi \leq 0$ and $\varphi(x_{0})=0$, then $\max\limits_{x\in[a,b]}%
\varphi\left(  x\right)=0$.  
Therefore, 
\[
 -\frac{1+M_{\ell}}{1+\ell}%
 \min\limits_{x\in[a,b]}\varphi\left(  x\right)
+\frac{M_{\ell}}{1+2\ell}\max\limits_{x\in[a,b]}
\varphi\left(  x\right)  +\max\limits_{x\in\overline{\left.  (a,b)
\right\backslash \omega}}\varphi\left(  x\right)= -\frac{1+M_{\ell}}{1+\ell}%
\min\limits_{x\in[a,b]}\varphi\left(  x\right)
+\max\limits_{x\in\overline{\left.  (a,b)
\right\backslash \omega}}\varphi\left(  x\right).
\]
Now, we use the fact that $C_{0} \in \left(0,1\right)$ and choose $\ell>1$ sufficiently large so that
\[
 -\frac{1+M_{\ell}}{1+\ell}%
\min\limits_{x\in[a,b]}\varphi\left(  x\right)
+\frac{M_{\ell}}{1+2\ell}\max\limits_{x\in[a,b]}%
\varphi\left(  x\right)  +\max\limits_{x\in\overline{\left.  (a,b)
\right\backslash \omega}}\varphi\left(  x\right) < 0.
\]
Consequently, there exist $C_{1}>0$ and $C_{2}>0$ such that for any $\hbar>0$
with $0<2\ell \hbar<T$,
\[
\left\Vert U\left(  \cdot,T\right)  \right\Vert ^{1+M_{\ell}}\leq
\mathrm{e}^{D_{\ell}} \mathrm{e}^{C_{1}\frac{1}{\hbar}}\left\Vert U\left(  \cdot,0\right)  \right\Vert
^{M_{\ell}}\left\Vert u\left(  \cdot,T\right)  \right\Vert _{L^{2}\left(
\omega\right)  }+ \mathrm{e}^{D_{\ell}} \mathrm{e}^{-C_{2}\frac{1}{\hbar}}\left\Vert U\left(  \cdot,0\right)
\right\Vert ^{1+M_{\ell}}.%
\]
Therefore, we obtain 
\begin{equation}\label{42}
\left\Vert U\left(\cdot,T\right)  \right\Vert ^{1+M_{\ell}}\leq
\mathrm{e}^{D_{\ell}} \mathrm{e}^{C_{1}\frac{1}{\hbar}}\left\Vert U\left(  \cdot,0\right)  \right\Vert
^{M_{\ell}}\left\Vert u\left(  \cdot,T\right)  \right\Vert _{L^{2}\left(
\omega\right)  } + \mathrm{e}^{D_{\ell}} \mathrm{e}^{-C_{2}\frac{1}{\hbar}}\left\Vert U\left(  \cdot,0\right)
\right\Vert ^{1+M_{\ell}}.%
\end{equation}
For any $2\ell \hbar\geq T$, we have $1\leq
\mathrm{e}^{D_{\ell}}\mathrm{e}^{C_{2}\frac{2\ell}{T}}\mathrm{e}^{-C_{2}\frac{1}{\hbar}}$. Using the fact that $\left\Vert U\left(  \cdot,T\right)  \right\Vert \leq\left\Vert
U\left(  \cdot,0\right)  \right\Vert ,$ we deduce, for any $2\ell \hbar\geq T$, that%
\begin{equation}\label{23.}
\left\Vert U\left(  \cdot,T\right)  \right\Vert ^{1+M_{\ell}}\leq \mathrm{e}^{D_{\ell}}
\mathrm{e}^{C_{1}\frac{1}{\hbar}}\left\Vert U\left(  \cdot,0\right)  \right\Vert
^{M_{\ell}}\left\Vert u\left(  \cdot,T\right)  \right\Vert _{L^{2}\left(
\omega\right)  }+\mathrm{e}^{D_{\ell}}\mathrm{e}^{C_{2}\frac{2\ell}{T}}\mathrm{e}^{-C_{2}\frac{1}{\hbar}}\left\Vert
U\left(\cdot,0\right)  \right\Vert ^{1+M_{\ell}}\text{ .}%
\end{equation}
Using \eqref{42}-\eqref{23.}, for any $\hbar >0$, we obtain that
\begin{equation*}
\left\Vert U\left(  \cdot,T\right)  \right\Vert ^{1+M_{\ell}}\leq \mathrm{e}^{D_{\ell}}
\mathrm{e}^{C_{1}\frac{1}{\hbar}}\left\Vert U\left(  \cdot,0\right)  \right\Vert
^{M_{\ell}}\left\Vert u\left(  \cdot,T\right)  \right\Vert _{L^{2}\left(
\omega\right)  }+\mathrm{e}^{D_{\ell}}\mathrm{e}^{C_{2}\frac{2\ell}{T}}\mathrm{e}^{-C_{2}\frac{1}{\hbar}}\left\Vert
U\left(\cdot,0\right)  \right\Vert ^{1+M_{\ell}}\text{ .}%
\end{equation*}
Finally, we choose $\hbar>0$ such that
\[
\mathrm{e}^{D_{\ell}}\mathrm{e}^{C_{2}\frac{2\ell}{T}}\mathrm{e}^{-C_{2}\frac{1}{\hbar}}\left\Vert U\left(
\cdot,0\right)  \right\Vert ^{1+M_{\ell}}=\frac{1}{2}\left\Vert U\left(
\cdot,T\right)  \right\Vert ^{1+M_{\ell}}\text{ ,}%
\]
that is,
\[
\mathrm{e}^{C_{2}\frac{1}{\hbar}}=2\mathrm{e}^{D_{\ell}}\mathrm{e}^{C_{2}\frac{2\ell}{T}}\left(  \frac{\left\Vert
U\left(  \cdot,0\right)  \right\Vert }{\left\Vert U\left(  \cdot,T\right)
\right\Vert }\right)^{1+M_{\ell}},%
\]
in order that%
\[
\left\Vert U\left(  \cdot,T\right)  \right\Vert^{1+M_{\ell}}\leq 2\mathrm{e}^{D_{\ell}}\left(
2\mathrm{e}^{D_{\ell}}\mathrm{e}^{C_{2}\frac{2\ell}{T}}\left(  \frac{\left\Vert U\left(\cdot,0\right)
\right\Vert }{\left\Vert U\left(\cdot,T\right)  \right\Vert }\right)
^{1+M_{\ell}}\right)  ^{\frac{C_{1}}{C_{2}}}\left\Vert U\left(  \cdot
,0\right)  \right\Vert ^{M_{\ell}}\left\Vert u\left(  \cdot,T\right)
\right\Vert _{L^{2}\left(  \omega\right)  }.
\]
Hence
\[
\left\Vert U\left(  \cdot,T\right)  \right\Vert^{1+M_{\ell}+\left(1+M_{\ell}\right) \frac{C_{1}}{C_{2}}} \leq2^{1+\frac{C_{1}}{C_{2}}}%
\mathrm{e}^{D_{\ell}\left(1+\frac{C_{1}}{C_{2}}\right)}\mathrm{e}^{C_{1}\frac{2\ell}{T}}\left(\left\Vert U\left(\cdot,0\right)
\right\Vert \right)
^{M_{\ell}+\left(  1+M_{\ell}\right)  \frac{C_{1}}{C_{2}}}\left\Vert u\left(
\cdot,T\right)  \right\Vert _{L^{2}\left(  \omega\right)  }.
\]
Setting $\sigma =M_{\ell}+\left(1+M_{\ell}\right) \frac{C_{1}}{C_{2}},$ $\mu =  2^{1+\frac{C_{1}}{C_{2}}}%
\mathrm{e}^{D_{\ell}\left(1+\frac{C_{1}}{C_{2}}\right)}$ and $k = 2\ell C_{1},$ we obtain
\[
\left\Vert U\left(  \cdot,T\right) \right\Vert \leq \left( \mu \mathrm{e}^{\frac{k}{T}}\right)^{\frac{1}{1+\sigma}}  \left\Vert U\left(\cdot,0\right) \right\Vert^{\frac{\sigma}{1+\sigma}}\left\Vert u\left(
\cdot,T\right)  \right\Vert _{L^{2}\left(  \omega\right)}^{\frac{1}{1+\sigma}  }\text{ .}%
\]
This completes the proof of Lemma \ref{lem1.1}.
\end{proof}

The following lemma is needed to establish the impulse approximate controllability of system \eqref{1.1}.
\begin{lemma}\label{lemma3.1}
Let $\vartheta=\left(\upsilon,\upsilon(a,\cdot), \upsilon(b,\cdot)\right)$ be the solution of \eqref{1.3}. Then there exist positive constants $\mathcal{M}_{1}$, $\mathcal{M}_{2}$ and $\delta=\delta\left(a,b,\omega\right)$ such that, for all $\varepsilon>0$, the following inequality holds
\begin{equation}\label{2.38.}
\left\Vert \vartheta \left(\cdot,T\right)\right\Vert^2\leq \left(\frac{\mathcal{M}_{1}\mathrm{e}^{\frac{\mathcal{M}_{2}}{T}}}{\varepsilon^{\delta}}\right)^{2}\left\Vert \upsilon\left(\cdot,T\right)\right \Vert^2_{L^2(\omega)}+\varepsilon^2\left \Vert \vartheta^0 \right\Vert ^2.
\end{equation}
\end{lemma}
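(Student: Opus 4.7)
The plan is to derive \eqref{2.38.} as a direct consequence of the logarithmic convexity estimate \eqref{1.2} from Lemma \ref{lem1.1}, combined with Young's inequality. First, I would apply that lemma to the solution $\vartheta=(\upsilon,\upsilon(a,\cdot),\upsilon(b,\cdot))$ of \eqref{1.3} with initial data $\vartheta^{0}$ and square both sides to obtain
\[
\|\vartheta(\cdot,T)\|^{2}\leq \bigl(\mu \mathrm{e}^{K/T}\|\upsilon(\cdot,T)\|_{L^{2}(\omega)}\bigr)^{2\beta}\,\|\vartheta^{0}\|^{2(1-\beta)},
\]
where $\mu,K>0$ and $\beta\in(0,1)$ are the constants furnished by Lemma \ref{lem1.1}; crucially, all three depend only on the geometric data $a,b,\omega$.

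The second step is to convert this multiplicative interpolation into an additive $\varepsilon$--$\varepsilon^{-\delta}$ inequality via Young's inequality with conjugate exponents $p=1/\beta$ and $q=1/(1-\beta)$. Setting $X=(\mu \mathrm{e}^{K/T})^{2}\|\upsilon(\cdot,T)\|_{L^{2}(\omega)}^{2}$ and $Y=\|\vartheta^{0}\|^{2}$, and introducing a free scaling parameter $\lambda>0$ by writing $X^{\beta}Y^{1-\beta}=(\lambda X)^{\beta}(\lambda^{-\beta/(1-\beta)}Y)^{1-\beta}$, Young's inequality yields
\[
X^{\beta}Y^{1-\beta}\leq \beta\,\lambda^{1/\beta}X+(1-\beta)\,\lambda^{-1/(1-\beta)}Y.
\]
Calibrating $\lambda=\bigl((1-\beta)/\varepsilon^{2}\bigr)^{1-\beta}$ forces $(1-\beta)\lambda^{-1/(1-\beta)}=\varepsilon^{2}$, while the coefficient of $X$ collapses to $C(\beta)\,\varepsilon^{-2(1-\beta)/\beta}$ with $C(\beta)=\beta(1-\beta)^{(1-\beta)/\beta}$.

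Setting $\delta:=(1-\beta)/\beta$, $\mathcal{M}_{2}:=K$, and $\mathcal{M}_{1}:=\mu\sqrt{C(\beta)}$ then produces exactly \eqref{2.38.}, all three constants inheriting their dependence on $a,b,\omega$ from Lemma \ref{lem1.1}. I do not anticipate any genuine analytical obstacle here: the argument is a routine application of Young's inequality to the log-convex interpolation, and the only care required is the bookkeeping needed to confirm that $\delta$ depends only on $(a,b,\omega)$, which is immediate because the same is true for the exponent $\beta$ produced by Lemma \ref{lem1.1}.
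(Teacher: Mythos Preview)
Your proposal is correct and follows essentially the same route as the paper: apply Lemma~\ref{lem1.1}, then use Young's inequality with exponents $1/\beta$ and $1/(1-\beta)$ and a free scaling parameter to split the interpolation into the desired additive form, arriving at the same constants $\delta=(1-\beta)/\beta$, $\mathcal{M}_2=K$, and $\mathcal{M}_1=\mu\,\beta^{1/2}(1-\beta)^{(1-\beta)/(2\beta)}$. One minor slip: the scaling identity you wrote, $X^{\beta}Y^{1-\beta}=(\lambda X)^{\beta}(\lambda^{-\beta/(1-\beta)}Y)^{1-\beta}$, leads after weighted AM--GM to $\beta\lambda X+(1-\beta)\lambda^{-\beta/(1-\beta)}Y$, not to the displayed inequality with exponents $1/\beta$ and $1/(1-\beta)$; the latter is still valid but corresponds to the scaling factor $\lambda^{1/\beta}$ rather than $\lambda$, so just align the parametrizations when you write it up.
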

\begin{proof}
Using Lemma \ref{lem1.1}, there exist $\mathcal{K}_{1}>0$, $\mathcal{K}_{2}>0$ and $\beta\in \left(0,1\right)$ such that
\begin{equation*}
\left\Vert \vartheta\left(\cdot,T\right)\right\Vert^2\leq \left(\mathcal{K}_{1}\mathrm{e}^\frac{\mathcal{K}_{2}}{T}\right)^2 \left\Vert \upsilon\left(\cdot,T\right)\right\Vert_{L^2(\omega)}^{2\beta}\left \Vert \vartheta\left(\cdot,0\right)\right\Vert^{2(1-\beta)}.
\end{equation*}
Let $\varepsilon>0$. We have 
\begin{align*}
\left(\mathcal{K}_{1}\mathrm{e}^\frac{\mathcal{K}_{2}}{T}\right)^2 \left\Vert \upsilon\left(\cdot,T\right)\right\Vert_{L^2(\omega)}^{2\beta}\left \Vert \vartheta\left(\cdot,0\right)\right\Vert^{2(1-\beta)}&=\left(\left(\mathcal{K}_{1}\mathrm{e}^\frac{\mathcal{K}_{2}}{T}\right)^\frac{1}{\beta}\left\Vert\upsilon(\cdot,T)\right\Vert_{L^2(\omega)}\frac{1}{\varepsilon^\frac{1-\beta}{\beta}}\left(1-\beta\right)^\frac{1-\beta}{2\beta}\right)^{2\beta}\\
&\quad \times \left( \varepsilon \left(\frac{1}{1-\beta}\right)^\frac{1}{2}\left\Vert \vartheta\left(\cdot,0\right)\right\Vert\right)^{2(1-\beta)}.
\end{align*}
Applying Young's inequality, we obtain
\[
\begin{array}[c]{ll}
\left(\mathcal{K}_{1}\mathrm{e}^\frac{\mathcal{K}_{2}}{T}\right)^2 \left\Vert \upsilon\left(\cdot,T\right)\right\Vert_{L^2(\omega)}^{2\beta}\left \Vert \vartheta\left(\cdot,0\right)\right\Vert^{2(1-\beta)}&\leq \left(\frac{\left(\mathcal{K}_{1}\mathrm{e}^\frac{\mathcal{K}_{2}}{T}\right)^\frac{1}{\beta}(1-\beta)^\frac{1-\beta}{2\beta}}{\varepsilon^\frac{1-\beta}{\beta}}\right)^2\beta \left\Vert \upsilon \left( \cdot,T\right)\right\Vert _{L^2(\omega)}^2\\
&\quad\quad\quad\quad+\varepsilon^2\left\Vert \vartheta(\cdot,0)\right\Vert^2\text{.}
\end{array}
\]
Thus,
\begin{equation*}
\left\Vert \vartheta \left(\cdot,T\right)\right\Vert^2\leq
\left(\frac{\left(\mathcal{K}_{1}\mathrm{e}^\frac{\mathcal{K}_{2}}{T}\right)^\frac{1}{\beta}(1-\beta)^\frac{1-\beta}{2\beta}}{\varepsilon^\frac{1-\beta}{\beta}}\right)^2\beta \left\Vert \upsilon \left( \cdot,T\right)\right\Vert _{L^2(\omega)}^2+\varepsilon^2\left\Vert \vartheta(\cdot,0)\right\Vert^2\text{.}
\end{equation*}
Therefore, we obtain our desired estimate \eqref{2.38.} with
$$
\mathcal{M}_{1}:=\mathcal{K}_{1}^{\frac{1}{\beta}}(1-\beta)^{\frac{1-\beta}{2 \beta}} \beta^{\frac{1}{2}} ; \quad \mathcal{M}_{2}:=\frac{\mathcal{K}_{2}}{\beta} ;\quad \delta:=\frac{1-\beta}{\beta}.
$$
\end{proof}
\section{Approximate impulse controllability} \label{sec4}
Now, we study the impulse approximate controllability of system \eqref{1.1} by using Lemma \ref{lem1.1}.
\begin{defi}[see Definition 1.2 of \cite{QSGW}] 
System \eqref{1.1} is null approximate impulse controllable at time $T$ if for any $\varepsilon > 0$ and
any $\Psi^0=\left(\psi^{0},c,d\right) \in \mathbb{L}^2$, there exists a control function $h \in L^2(\omega),$ such that the associated state at final time satisfies
\begin{equation*}
\|\Psi(\cdot, T)\| \leq \varepsilon\left\Vert \Psi^0\right\Vert .
\end{equation*}
\end{defi}
If the system \eqref{1.1} is null approximate impulse controllable at time $T$, then for every $\varepsilon >0$ and $\Psi^0 \in \mathbb{L}^2$, the set
\begin{equation*}
\mathcal{C}_{T, \Psi^{0}, \varepsilon}:=\left\{h \in L^{2}(\omega): \text { the solution of }\eqref{1.1}\text { satisfies }\left\Vert\Psi(\cdot, T)\right\Vert \leq \varepsilon\left\Vert \Psi^{0}\right\Vert\right\}
\end{equation*}
is nonempty. In this case, we define the cost of null approximate impulse controllability as follows
$$K(T,\varepsilon):=\sup_{\left\|\Psi^0\right\|=1} \inf_{h \in \mathcal{C}_{T, \Psi^{0}, \varepsilon}} \|h\|_{L^{2}(\omega)}.
$$

\begin{rmq}
Since the semigroup $\left(\mathrm{e}^{t \mathbf{A}}\right)_{t\ge 0}$ is analytic, the range $\mathcal{R}\left(\mathrm{e}^{T \mathbf{A}}\right)$ is dense in $\mathbb{L}^2$. Hence, the null approximate impulse controllability is equivalent to the approximate impulse controllability:
$$\forall \Psi^0, \Psi^T \in \mathbb{L}^2, \;\forall \varepsilon>0,\; \exists\, h \in L^2(\omega) \colon \left\|\Psi(\cdot,T)-\Psi^T\right\| \le \varepsilon \|\Psi^0\|.$$
Thus, we will use ``approximate controllability" instead of ``null approximate controllability".
\end{rmq}
Next, we state the main result on approximate impulse controllability for system \eqref{1.1}.
\begin{teo}
The system \eqref{1.1} is approximate impulse controllable at any time $T > 0$. Moreover, for any $\varepsilon > 0,$ the cost of approximate impulse control satisfies
$$K(T, \varepsilon) \leq \frac{\mathcal{M}_{1} \mathrm{e}^{\frac{\mathcal{M}_{2}}{T-\tau}}}{\varepsilon^{\delta}},$$ where the positive constants $\mathcal{M}_1$, $\mathcal{M}_2$ and $\delta$ are from the estimate \eqref{2.38.}.
\end{teo}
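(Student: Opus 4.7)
The plan is to prove this theorem via a penalized Hilbert Uniqueness Method combined with the interpolation estimate of Lemma~\ref{lemma3.1}. First, using the mild solution formula
\[
\Psi(\cdot,T) = \mathrm{e}^{T\mathbf{A}}\Psi^0 + \mathrm{e}^{(T-\tau)\mathbf{A}}(\mathds{1}_\omega h,0,0),
\]
I would recast the impulsive control problem as a convex optimization problem on the dual state space $\mathbb{L}^2$. The self-adjointness of $\mathbf{A}$ ensures that the adjoint system of \eqref{1.1} coincides with the forward system \eqref{1.3}, so I would introduce the penalized functional $J_\varepsilon:\mathbb{L}^2\to \mathbb{R}$ defined by
\[
J_\varepsilon(\phi_T) = \tfrac{1}{2}\|\upsilon(\cdot,T-\tau)\|_{L^2(\omega)}^2 + \varepsilon\|\Psi^0\|\,\|\phi_T\| + \langle \mathrm{e}^{T\mathbf{A}}\Psi^0,\phi_T\rangle,
\]
where $\vartheta=(\upsilon,\upsilon(a,\cdot),\upsilon(b,\cdot))$ denotes the solution of \eqref{1.3} on $[0,T-\tau]$ with initial data $\phi_T$. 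This functional is convex and continuous.

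The key analytical step is coercivity. Given a minimizing sequence $(\phi_T^n)$ with $\|\phi_T^n\|\to\infty$, boundedness of $J_\varepsilon(\phi_T^n)$ together with renormalization $\tilde\phi_T^n:=\phi_T^n/\|\phi_T^n\|$ forces $\|\tilde\upsilon^n(\cdot,T-\tau)\|_{L^2(\omega)}\to 0$. Applying Lemma~\ref{lemma3.1} to $\tilde\phi_T^n$ with an auxiliary parameter $\tilde\varepsilon<\varepsilon$ yields $\limsup \|\mathrm{e}^{(T-\tau)\mathbf{A}}\tilde\phi_T^n\|\le \tilde\varepsilon$, and combining with the self-adjointness/contractivity bound
\[
|\langle \mathrm{e}^{T\mathbf{A}}\Psi^0,\tilde\phi_T^n\rangle| \le \|\Psi^0\|\,\|\mathrm{e}^{(T-\tau)\mathbf{A}}\tilde\phi_T^n\|
\]
gives $\liminf J_\varepsilon(\phi_T^n)/\|\phi_T^n\|\ge (\varepsilon-\tilde\varepsilon)\|\Psi^0\|>0$, a contradiction. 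By the direct method, $J_\varepsilon$ admits a minimizer $\hat\phi_T\in\mathbb{L}^2$; its first-order optimality condition identifies the control as $h:=\hat\upsilon(\cdot,T-\tau)|_\omega$ and produces $\Psi(\cdot,T)=-\varepsilon\|\Psi^0\|\hat\phi_T/\|\hat\phi_T\|$, whose norm equals $\varepsilon\|\Psi^0\|$ (the degenerate case $\hat\phi_T=0$ gives $h=0$ together with $\|\mathrm{e}^{T\mathbf{A}}\Psi^0\|\le\varepsilon\|\Psi^0\|$ directly). In either case the approximate impulse controllability of \eqref{1.1} is established.

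For the cost bound I would exploit $J_\varepsilon(\hat\phi_T)\le J_\varepsilon(0)=0$, which rearranges into
\[
\tfrac{1}{2}\|h\|_{L^2(\omega)}^2 + \varepsilon\|\Psi^0\|\,\|\hat\phi_T\| \le |\langle \mathrm{e}^{T\mathbf{A}}\Psi^0,\hat\phi_T\rangle| \le \|\Psi^0\|\,\|\mathrm{e}^{(T-\tau)\mathbf{A}}\hat\phi_T\|.
\]
Now I would apply Lemma~\ref{lemma3.1} a second time, this time with the very parameter $\varepsilon$ appearing in the controllability statement, obtaining
\[
\|\mathrm{e}^{(T-\tau)\mathbf{A}}\hat\phi_T\| \le \frac{\mathcal{M}_1\mathrm{e}^{\mathcal{M}_2/(T-\tau)}}{\varepsilon^\delta}\|h\|_{L^2(\omega)}+\varepsilon\|\hat\phi_T\|.
\]
Substituting back, the two $\varepsilon\|\Psi^0\|\,\|\hat\phi_T\|$ contributions cancel exactly, leaving $\|h\|_{L^2(\omega)}\le C\,\mathcal{M}_1\mathrm{e}^{\mathcal{M}_2/(T-\tau)}\varepsilon^{-\delta}\|\Psi^0\|$, which is the announced bound (with the harmless numerical constant absorbed into $\mathcal{M}_1$). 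The main obstacle is precisely the \emph{calibration} of the two uses of Lemma~\ref{lemma3.1}: coercivity requires a parameter strictly smaller than $\varepsilon$, while the cancellation in the cost estimate requires a parameter equal to $\varepsilon$. Keeping these two applications separate, and verifying that the optimality condition really yields the claimed terminal identity $\Psi(\cdot,T)=-\varepsilon\|\Psi^0\|\hat\phi_T/\|\hat\phi_T\|$ (which is what produces the sharp, rather than suboptimal, $\varepsilon$-approximation), is the delicate part; the remainder is a standard convex-duality argument.
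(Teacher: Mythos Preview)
Your argument is correct, but it follows a genuinely different route from the paper's. The paper minimizes the \emph{smooth} quadratic functional
\[
J_{\varepsilon,\kappa}(\vartheta^0)=\tfrac{\kappa^2}{2}\|\upsilon(\cdot,T-\tau)\|_{L^2(\omega)}^2+\tfrac{\varepsilon^2}{2}\|\vartheta^0\|^2+\langle\Psi^0,\vartheta(\cdot,T)\rangle,
\qquad \kappa:=\frac{\mathcal{M}_1\mathrm{e}^{\mathcal{M}_2/(T-\tau)}}{\varepsilon^\delta},
\]
so that strict convexity and coercivity are immediate (no observability needed for existence of a minimizer). The Euler--Lagrange equation, tested at the minimizer itself, combined with a \emph{single} application of Lemma~\ref{lemma3.1} yields in one stroke the inequality $\kappa^{-2}\|h\|_{L^2(\omega)}^2+\varepsilon^{-2}\|\Psi(\cdot,T)\|^2\le\|\Psi^0\|^2$, which gives both the approximate controllability and the cost bound with the exact constant $\kappa$. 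By contrast, you use the classical Lions-type functional with the non-differentiable term $\varepsilon\|\Psi^0\|\,\|\phi_T\|$; this forces you to invoke Lemma~\ref{lemma3.1} twice (once, with auxiliary parameter $\tilde\varepsilon<\varepsilon$, to obtain coercivity, and once, with parameter $\varepsilon$, to get the cancellation in the cost estimate) and to handle the subdifferential case $\hat\phi_T=0$ separately. Your approach buys the sharp terminal identity $\|\Psi(\cdot,T)\|=\varepsilon\|\Psi^0\|$, at the price of a longer argument and an extra factor $2$ in the control bound (which, as you note, is harmless for the stated conclusion). Both proofs are valid; the paper's is shorter because the Tikhonov penalty makes the variational machinery trivial and concentrates all the analytic work into one use of Lemma~\ref{lemma3.1}.
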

\begin{proof}
Consider the following system
\begin{empheq}[left = \empheqlbrace]{alignat=2}\label{3.41}
\begin{aligned}
&\partial_{t} \upsilon-\partial_{xx} \upsilon=0, && \qquad \text { in } (a,b) \times(0, T), \\
&\partial_{t}\upsilon(a,t) - \partial_{x} \upsilon(a,t) =0, && \qquad \text { in } (0, T), \\
&\partial_{t}\upsilon(b,t) + \partial_{x} \upsilon(b,t) =0, && \qquad \text { in } (0, T), \\
&\left(\upsilon\left(\cdot,0\right),\upsilon\left(a,0\right),\upsilon\left(b,0\right)\right)=\left(\upsilon^0,c_2,d_2\right):=\vartheta^{0}, && \qquad\text { in } (a,b).\\
\end{aligned}
\end{empheq}
Let us fix $\varepsilon>0,$ $\Psi^{0}\in \mathbb{L}^2$, and put $
\kappa := \dfrac{\mathcal{M}_{1} \mathrm{e}^{\frac{\mathcal{M}_{2}}{T-\tau}}}{\varepsilon^{\delta}}$. We define the functional $J_{\varepsilon,\kappa}: \mathbb{L}^2 \rightarrow \mathbb{R}$ as follows
\begin{equation*}
J_{\varepsilon,\kappa}\left(\vartheta^{0}\right)=\frac{\kappa^{2}}{2}\|\upsilon(\cdot, T-\tau)\|_{L^{2}(\omega)}^{2}+\frac{\varepsilon^{2}}{2}\left\|\vartheta^{0}\right\|^2 + \left\langle \Psi^{0}, \vartheta(\cdot,T) \right\rangle,
\end{equation*}
where $\vartheta=(\upsilon,\upsilon(a,\cdot),\upsilon(b,\cdot))$ is the solution of \eqref{3.41}.
Notice that $J_{\varepsilon,\kappa}$ is strictly convex, $C^1$ and coercive, i.e.,  $J_{\varepsilon,\kappa}\left(\vartheta^{0}\right) \rightarrow \infty$ when $\left\|\vartheta^{0}\right\| \rightarrow \infty$. Therefore, $J_{\varepsilon,\kappa}$ has a unique minimizer $\tilde{\vartheta}^{0} \in \mathbb{L}^2$ such that 
$$J_{\varepsilon,\kappa}\left(\tilde{\vartheta}^{0}\right)=\min\limits_{\vartheta^{0} \in \mathbb{L}^2} J_{\varepsilon,\kappa}\left(\vartheta^{0}\right).$$ 
It implies that $J_{\varepsilon,\kappa}^{\prime}\left(\tilde{\vartheta}^{0}\right) \zeta^{0}=0$ for all $\zeta^{0} \in \mathbb{L}^2,$ i.e., the following estimate holds for any $\zeta^{0}$
\begin{equation}\label{4.39}
\kappa^{2} \int_{\omega} \tilde{\upsilon}(x, T-\tau) z(x, T-\tau) \mathrm{d} x+\varepsilon^{2} \left\langle \tilde{\vartheta}^{0},\zeta^{0} \right\rangle+\left\langle \Psi^{0}, \zeta(\cdot,T) \right\rangle = 0,
\end{equation}
where $\tilde{\vartheta}=\left(\tilde{\upsilon},\tilde{\upsilon}(a,\cdot),\tilde{\upsilon}(b,\cdot)\right)$ and $\zeta=(z,z(a,\cdot),z(b,\cdot))$ are respectively the solutions of \eqref{3.41} corresponding to $\tilde{\vartheta}^{0}$ and $\zeta^{0}$.
Recall that $\psi$ satisfies
\begin{empheq}[left = \empheqlbrace]{alignat=2} \label{5.1}
\begin{aligned}
&\partial_{t} \psi(x,t)-\partial_{xx} \psi(x,t)=0, && \qquad (x,t)\in(a,b) \times(0, T) \backslash\{\tau\},\\
&\psi(x, \tau)=\psi\left(x, \tau^{-}\right)+\mathds{1}_{\omega}(x) h(x), && \qquad x\in (a,b),\\
&\partial_{t}\psi(a,t) - \partial_{x} \psi(a,t)=0, && \qquad t\in(0, T) \backslash\{\tau\}, \\
&\partial_{t}\psi(b,t) + \partial_{x} \psi(b,t)=0, && \qquad t\in(0, T) \backslash\{\tau\}, \\
&\psi(a, \tau)=\psi\left(a, \tau^{-}\right),\; \psi(b, \tau)=\psi\left(b, \tau^{-}\right),\\
& \left(\psi(x,0),\psi(a, 0),\psi(b, 0)\right)=\left(\psi^0(x),c,d\right), && \qquad x\in(a,b),
\end{aligned}
\end{empheq}
Multiplying \eqref{5.1}$_{1}$ by $z(\cdot,T-t)$, \eqref{5.1}$_{3}$ by $z(a,T-t)$ and \eqref{5.1}$_{4}$ by $z(b,T-t)$ for all $t\in (0,T) \backslash\{\tau\}$, we obtain
\[
\begin{array}{ll}
&\displaystyle\int_{a}^b\partial_{t}\psi(x,t)z(x,T-t) \mathrm{d}x - \int_{a}^b\partial_{xx}\psi(x,t)z(x,T-t) \mathrm{d}x 
+\partial_{t}\psi(a,t)z(a,T-t) \\
&-\partial_{x}\psi(a,t)z(a,T-t)+\partial_{t}\psi(b,t)z(b,T-t)+\partial_{x}\psi(b,t)z(b,T-t)  =0.
\end{array}
\]
Integrating by parts twice, we obtain
\[
\begin{array}[c]{ll}
&\displaystyle\int_{a}^b\partial_{t}\psi(x,t)z(x,T-t) \mathrm{d}x+\psi(b,t)\partial_{x}z(b,T-t)-\psi(a,t)\partial_{x}z(a,T-t) \\
&-\displaystyle\int_{a}^b\psi(x,t)\partial_{xx} z(x,T-t) \mathrm{d}x +\partial_{t}\psi(a,t)z(a,T-t)+\partial_{t}\psi(b,t)z(b,T-t)=0.
\end{array}
\]
Since $\zeta=(z,z(a,\cdot),z(b,\cdot))$ is the solution of \eqref{3.41}, then $\partial_{xx} z(\cdot,T-t)=\partial_{t}z(\cdot,T-t)$, $$\partial_{x}z(a,T-t)=\partial_{t}z(a,T-t)\; \text{and}\; \partial_{x}z(b,T-t)=-\partial_{t}z(b,T-t).$$
Therefore,
\[
\begin{array}[c]{ll}
&\displaystyle\int_{a}^b\partial_{t}\psi(x,t)z(x,T-t) \mathrm{d}x-\int_{a}^b\psi(x,t)\partial_{t} z(x,T-t) \mathrm{d}x  -\psi(b,t)\partial_{t}z(b,T-t)\\
&-\psi(a,t)\partial_{t}z(a,T-t)+\partial_{t}\psi(a,t)z(a,T-t)+\partial_{t}\psi(b,t)z(b,T-t)=0.
\end{array}
\]
That is, 
\begin{equation}\label{3.44}
  \displaystyle\int_{a}^b\frac{\mathrm{d}}{\mathrm{d}\mathrm{t}}\bigg( \psi(x,t)z(x,T-t) \bigg) \mathrm{d}x + \frac{\mathrm{d}}{\mathrm{d} \mathrm{t}} \bigg( \psi(a,t)z(a,T-t)\bigg) + \frac{\mathrm{d}}{\mathrm{d} \mathrm{t}} \bigg( \psi(b,t)z(b,T-t)\bigg)  = 0.
\end{equation}
Integrating \eqref{3.44} over $(0, \tau)$ yields
\begin{equation*}
\int_{a}^b\left[\psi(x,t)z(x,T-t)\right]_{0}^{\tau} \mathrm{d} x+\left[\psi(a,t)z(a,T-t)\right]_{0}^{\tau}+\left[\psi(b,t)z(b,T-t)\right]_{0}^{\tau} =0.
\end{equation*}
Therefore,
\begin{align}\label{4.41}
&\int_{a}^b \left(\psi(x,\tau^{-})z(x,T-\tau) -\psi(x,0)z(x,T)\right) \mathrm{d}x+\psi(a,\tau^{-})z(a,T-\tau) -\psi(a,0)z(a,T)\nonumber\\
&+\psi(b,\tau^{-})z(b,T-\tau) -\psi(b,0)z(b,T)=0.
\end{align}
Integrating \eqref{3.44} over $(\tau, T)$ yields
\begin{equation*}
\int_{a}^b\left[\psi(x,t)z(x,T-t)\right]_{\tau}^{T} \mathrm{d}x+\left[\psi(a,t)z(a,T-t)\right]_{\tau}^{T}+\left[\psi(b,t)z(b,T-t)\right]_{\tau}^{T}=0.
\end{equation*}
Hence
\begin{align}\label{4.42}
&\int_{a}^b\left(\psi(x,T)z(x,0) -\psi(x,\tau)z(x,T-\tau)\right) \mathrm{d}x+\psi(a,T)z(a,0) -\psi(a,\tau)z(a,T-\tau)\nonumber \\
&+\psi(b,T)z(b,0) -\psi(b,\tau)z(b,T-\tau)=0.
\end{align}
Combining \eqref{4.41}-\eqref{4.42} and using the fact that $\psi(\cdot, \tau)=\psi\left(\cdot, \tau^{-}\right)+\mathds{1}_{\omega} h(\cdot)$, $\psi(a, \tau)=\psi\left(a, \tau^{-}\right)$ and $\psi(b, \tau)=\psi\left(b, \tau^{-}\right)$, we obtain
\begin{equation}\label{4.43}
\begin{aligned}
&\int_{\omega} h(x)z(x,T-\tau) \mathrm{d} x+\int_{a}^b\psi(x,0)z(x,T)\mathrm{d} x-\int_{a}^b \psi(x,T)z(x,0) \mathrm{d}x+\psi(a,0)z(a,T)  \\
& -\psi(a,T)z(a,0)+\psi(b,0)z(b,T)-\psi(b,T)z(b,0) = 0.
\end{aligned}
\end{equation}
Thus, if we choose $h(x)=\kappa^{2} \tilde{v}(x, T-\tau)$, we obtain, from \eqref{4.39} and \eqref{4.43}, that
\begin{equation*}
\left\langle \Psi(\cdot, T) +\varepsilon^{2} \tilde{\vartheta}^{0}, \zeta^{0} \right\rangle =0, \qquad \forall \zeta^{0} \in \mathbb{L}^2.
\end{equation*}
Hence, $\Psi(x, T) = -\varepsilon^{2} \tilde{\vartheta}^{0}(x).$ Moreover, with $\zeta^{0} =\tilde{\vartheta}^{0},$ using the Cauchy-Schwarz inequality, it follows from \eqref{4.39} that
\begin{equation*}
\kappa^{2}\|\tilde{\upsilon}(\cdot, T-\tau)\|_{L^{2}(\omega)}^{2}+\varepsilon^{2}\left\Vert\tilde{\vartheta}^{0}\right\Vert ^2 \leq\left\|\Psi^{0}\right\| \|\tilde{\vartheta}(\cdot, T)\|.
\end{equation*}
By virtue of the energy estimate for the system \eqref{3.41}, which is
\begin{equation*}
\|\tilde{\vartheta}(\cdot, T)\| \leq\|\tilde{\vartheta}(\cdot, T-\tau)\|,
\end{equation*}
we obtain
\begin{equation}\label{3.51}
\kappa^{2}\|\tilde{\upsilon}(\cdot, T-\tau)\|_{L^{2}(\omega)}^{2}+\varepsilon^{2}\left\|\tilde{\vartheta}^{0}\right\| ^2\leq\left\|\Psi^{0}\right\| \|\tilde{\vartheta}(\cdot, T-\tau)\|.
\end{equation} 
Applying the estimate of Lemma \ref{lemma3.1}, which is
\begin{equation*}
\|\tilde{\vartheta}(\cdot, T-\tau)\|^{2} \leq \kappa^{2}\|\tilde{\upsilon}(\cdot, T-\tau)\|_{L^{2}(\omega)}^{2}+\varepsilon^{2}\|\tilde{\vartheta}(\cdot, 0)\|^{2},
\end{equation*}
and using \eqref{3.51}, we obtain
\begin{equation}\label{3.53}
\|\tilde{\vartheta}(\cdot, T-\tau)\| \leq\left\|\Psi^{0}\right\|.
\end{equation}
Finally, combining \eqref{3.51} and \eqref{3.53}, we obtain
\begin{equation*}
\kappa^{2}\|\tilde{\upsilon}(\cdot, T-\tau)\|_{L^{2}(\omega)}^{2}+\varepsilon^{2}\left\|\tilde{\vartheta}^{0}\right\| ^2\leq\left\|\Psi^{0}\right\|^2.
\end{equation*}
Recall that $\Psi(x, T) = -\varepsilon^{2} \tilde{\vartheta}^{0}(x)$ and $h(x)=\kappa^{2} \tilde{v}(x, T-\tau)$. Thus
\begin{equation*}
\frac{1}{\kappa^{2}}\|h\|_{L^{2}(\omega)}^{2}+\frac{1}{\varepsilon^{2}}\|\Psi(\cdot, T)\|^{2} \leq\left\|\Psi^{0}\right\|^{2}.
\end{equation*}
This completes the proof.
\end{proof}

\section{An algorithm for computing HUM impulse controls} \label{sec5}
In this section, we present a numerical method to compute the HUM impulse controls. This will be done based on a penalized HUM approach combined with a CG algorithm.
\subsection{The HUM impulse controls}

Let $\Psi^0$ be an initial datum to be controlled (for notational simplicity we assume that $\|\Psi^0\|=1$) and let $\varepsilon>0$ be fixed. We define the cost functional $J_{\varepsilon}: \mathbb{L}^2 \rightarrow \mathbb{R}$ by
\begin{equation*}
J_{\varepsilon}\left(\vartheta^{0}\right)=\frac{1}{2}\|\upsilon(\cdot, T-\tau)\|_{L^{2}(\omega)}^{2}+\frac{\varepsilon}{2}\left\|\vartheta^{0}\right\|^2 + \left\langle \Psi^{0}, \vartheta(\cdot,T) \right\rangle,
\end{equation*}
where $\vartheta=(\upsilon,\upsilon(a,\cdot),\upsilon(b,\cdot))$ is the solution of \eqref{3.41}. The unique minimizer $\tilde{\vartheta}^{0}_\varepsilon \in \mathbb{L}^2$ of $J_{\varepsilon}$ is characterized by the Euler-Lagrange equation
\begin{equation}\label{Eq2}
\int_{\omega} \tilde{\upsilon}_\varepsilon(x, T-\tau) z(x, T-\tau) \mathrm{d} x+\varepsilon \left\langle \tilde{\vartheta}^{0}_\varepsilon,\zeta^{0} \right\rangle+\left\langle \Psi^{0}, \zeta(\cdot,T) \right\rangle = 0
\end{equation}
for all $\zeta^0 \in \mathbb{L}^2$, where $\tilde{\vartheta}_\varepsilon$ and $\zeta=(z,z(a,\cdot),z(b,\cdot))$ are respectively the solutions of \eqref{3.41} associated to $\tilde{\vartheta}^{0}_\varepsilon$ and $\zeta^{0}$. Let us introduce the control operator $\mathcal{B}\colon \mathbb{L}^2 \rightarrow \mathbb{L}^2$ defined by
$$\mathcal{B}(\upsilon, c,d)=(\mathds{1}_\omega\upsilon,0,0)$$
and the non-negative symmetric operator
$$\Lambda_\tau \colon \mathbb{L}^2 \rightarrow \mathbb{L}^2,$$
usually called the Gramian operator, defined by
$$\Lambda_\tau \varrho=\mathrm{e}^{(T-\tau)\mathbf{A}} \mathcal{B} \,\mathrm{e}^{(T-\tau)\mathbf{A}} \varrho.$$
Then, the impulse HUM control is given by
$$\widehat{h}=\mathcal{B} \,\mathrm{e}^{(T-\tau)\mathbf{A}} \tilde{\vartheta}^{0}_\varepsilon,$$
and the equation \eqref{Eq2} can be reformulated as
\begin{equation}
    \left(\Lambda_\tau +\varepsilon \mathbf{I}\right) \tilde{\vartheta}^{0}_\varepsilon = -\mathrm{e}^{T\mathbf{A}} \Psi^0.
\end{equation}
To solve this last problem, we will use the following CG algorithm.

\begin{algorithm}[H]
\noindent Set $k=0$ and choose an initial guess $\mathbf{f}_0=\left(f_0,f_0^a,f_0^b\right) \in \mathbb{L}^2$.\;
Solve the problem
\begin{empheq}[left = \empheqlbrace]{alignat=2}\label{1.3}
\begin{aligned}
&\partial_{t} p_0(x,t)-\partial_{xx} p_0(x,t)=0, && \quad (x,t)\in (a,b) \times(0, T) , \\
&\partial_{t}p_0(a,t) - \partial_{x}p_0(a,t) =0, && \quad t\in (0, T), \\
&\partial_{t}p_0(b,t) + \partial_{x}p_0(b,t) =0, && \quad t\in (0, T), \\
&\left(p_0(x,0),p_0(a, 0),p_0(b, 0)\right)=\mathbf{f}_0(x), && \quad x\in (a,b), \notag
\end{aligned}
\end{empheq}
and set $\mathbf{u}_0(x)=\mathcal{B}\, \mathbf{p}_0(T-\tau,x)$ \;
Solve the problem
\begin{empheq}[left = \empheqlbrace]{alignat=2}\label{1.3}
\begin{aligned}
&\partial_{t} y_0(x,t)-\partial_{xx} y_0(x,t)=0, && \, (x,t)\in (a,b) \times(0, T) , \\
&\partial_{t}y_0(a,t) - \partial_{x}y_0(a,t) =0, && \, t\in (0, T), \\
&\partial_{t}y_0(b,t) + \partial_{x}y_0(b,t) =0, && \, t\in (0, T), \\
&\left(y_0(x,0),y_0(a, 0),y_0(b, 0)\right)=\mathbf{u}_0(x), && \, x\in (a,b), \notag
\end{aligned}
\end{empheq}
compute $\mathbf{g}_0=\varepsilon \mathbf{f}_0 + Y_0(T-\tau) + \mathrm{e}^{T\mathbf{A}} \Psi^0$ and set $\mathbf{w}_0=\mathbf{g}_0$
\;
For $k=1,2,\ldots,$ until convergence,
solve the problem
\begin{empheq}[left = \empheqlbrace]{alignat=2}\label{1.3}
\begin{aligned}
&\partial_{t} p_k(x,t)-\partial_{xx} p_k(x,t)=0, && \, (x,t)\in (a,b) \times(0, T) , \\
&\partial_{t}p_k(a,t) - \partial_{x}p_k(a,t) =0, && \, t\in (0, T), \\
&\partial_{t}p_k(b,t) + \partial_{x}p_k(b,t) =0, && \, t\in (0, T), \\
&\left(p_k(x,0),p_k(a, 0),p_k(b, 0)\right)=\mathbf{w}_{k-1}(x), && \, x\in (a,b), \notag
\end{aligned}
\end{empheq}
and set $\mathbf{u}_k(x)=\mathcal{B}\, \mathbf{p}_k(T-\tau,x)$\;
Solve the problem
\begin{empheq}[left = \empheqlbrace]{alignat=2}\label{1.3}
\begin{aligned}
&\partial_{t} y_k(x,t)-\partial_{xx} y_k(x,t)=0, && \, (x,t)\in (a,b) \times(0, T) , \\
&\partial_{t}y_k(a,t) - \partial_{x}y_k(a,t) =0, && \, t\in (0, T), \\
&\partial_{t}y_k(b,t) + \partial_{x}y_k(b,t) =0, && \, t\in (0, T), \\ 
&\left(y_k(x,0),y_k(a, 0),y_k(b, 0)\right)=\mathbf{u}_k(x), && \, x\in (a,b), \notag
\end{aligned}
\end{empheq}
and compute $$\bar{\mathbf{g}}_k=\varepsilon \mathbf{w}_{k-1} + Y_k(T-\tau) \qquad \text{ and } \qquad \rho_k=\dfrac{\|\mathbf{g}_{k-1}\|^2}{\langle \bar{\mathbf{g}}_k, \mathbf{w}_{k-1} \rangle},$$
then
$$\mathbf{f}_k=\mathbf{f}_{k-1}-\rho_k \mathbf{w}_{k-1} \qquad \text{ and } \qquad \mathbf{g}_k=\mathbf{g}_{k-1}-\rho_k \bar{\mathbf{g}}_k$$\;
\noindent \textbf{If} $\dfrac{\|\mathbf{g}_{k}\|}{\|\mathbf{g}_{0}\|} \le tol$, stop the algorithm, set $\mathbf{g}=\mathbf{f}_{k}$ and solve the problem
\begin{empheq}[left = \empheqlbrace]{alignat=2}\label{1.3}
\begin{aligned}
&\partial_{t} p_k(x,t)-\partial_{xx} p_k(x,t)=0, && \, (x,t)\in (a,b) \times(0, T) , \\
&\partial_{t}p_k(a,t) - \partial_{x}p_k(a,t) =0, && \, t\in (0, T), \\
&\partial_{t}p_k(b,t) + \partial_{x}p_k(b,t) =0, && \, t\in (0, T), \\
&\left(p_k(x,0),p_k(a, 0),p_k(b, 0)\right)=\mathbf{g}(x), && \, x\in (a,b), \notag
\end{aligned}
\end{empheq}
and set $\mathbf{u}_k(x)=\mathcal{B}\, \mathbf{p}_k(T-\tau,x)$\;
\textbf{Else} compute $$\gamma_k=\dfrac{\|\mathbf{g}_{k}\|^2}{\|\mathbf{g}_{k-1}\|^2} \qquad \text{ and then } \qquad \mathbf{w}_k=\mathbf{g}_{k}+\gamma_k \mathbf{w}_{k-1}.$$
\caption{HUM with CG Algorithm}
\label{alg1}
\end{algorithm}

\newpage
\subsection{Numerical experiments}
Next, we present some numerical tests to illustrate our theoretical
results and show the efficiency of the CG algorithm presented above.

We use the method of lines to numerically solve different PDEs. A uniform space grid $x_i=i \Delta x$ for  $i=\overline{1,N_x}$, with $\Delta x=\dfrac{1}{N_x}$, is used to divide the space interval $[a,b]$ for the numerical resolution of systems with dynamic boundary conditions, used in Algorithm \ref{alg1}. We take $N_x=25$ as a space mesh parameter.

In the numerical tests, we will choose the following values
$$T=0.02, \quad \tau=0.01, \quad a=0,\; b=1, \quad \omega=(0.2, 0.8) \Subset (0,1),$$
and we consider the initial datum to be controlled as
$$\psi_0(x)=\sqrt{2} \sin(\pi x), \qquad x \in [0,1].$$
Next, we plot the uncontrolled and the controlled solutions.
\begin{figure}[H]
\centering
\includegraphics[scale=0.5]{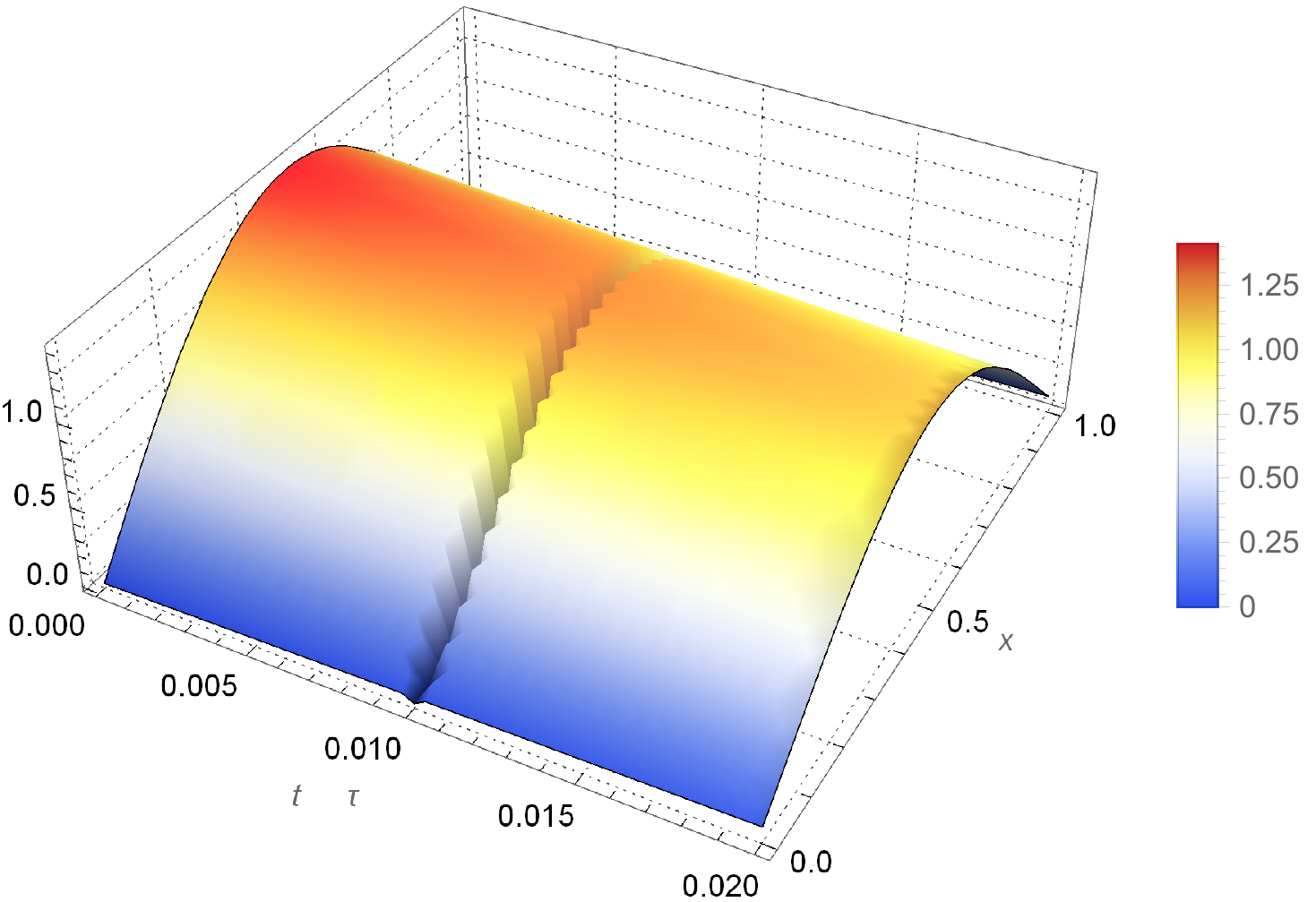}
\caption{The uncontrolled solution.}
\end{figure}

\begin{figure}[H]
\centering
\includegraphics[scale=0.6]{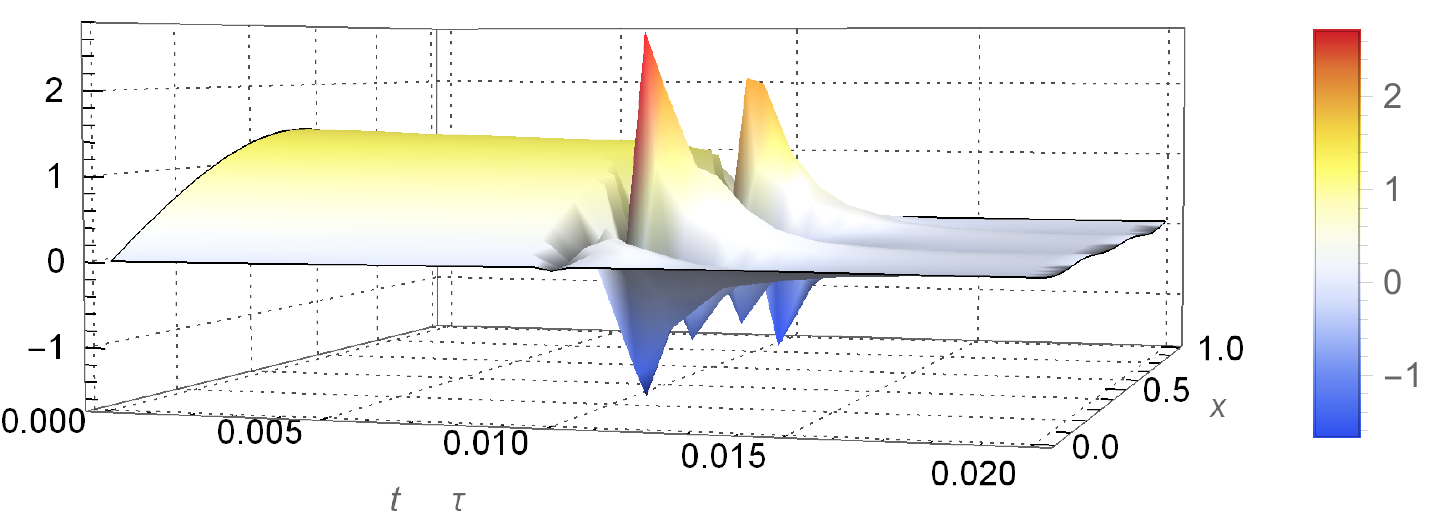}
\caption{The controlled solution.}
\label{fig1}
\end{figure}

The initial iteration of the algorithm is chosen as $\mathbf{f}_0=0$. We have chosen $\varepsilon=10^{-4}$ and the stopping parameter $tol=10^{-3}$. The algorithm stops at the iteration number $k_*=25$. In Figure \ref{fig1}, we clearly see the effect of the impulse control at time $\tau=0.01$.

\begin{figure}[H]
\centering
\includegraphics[scale=0.5]{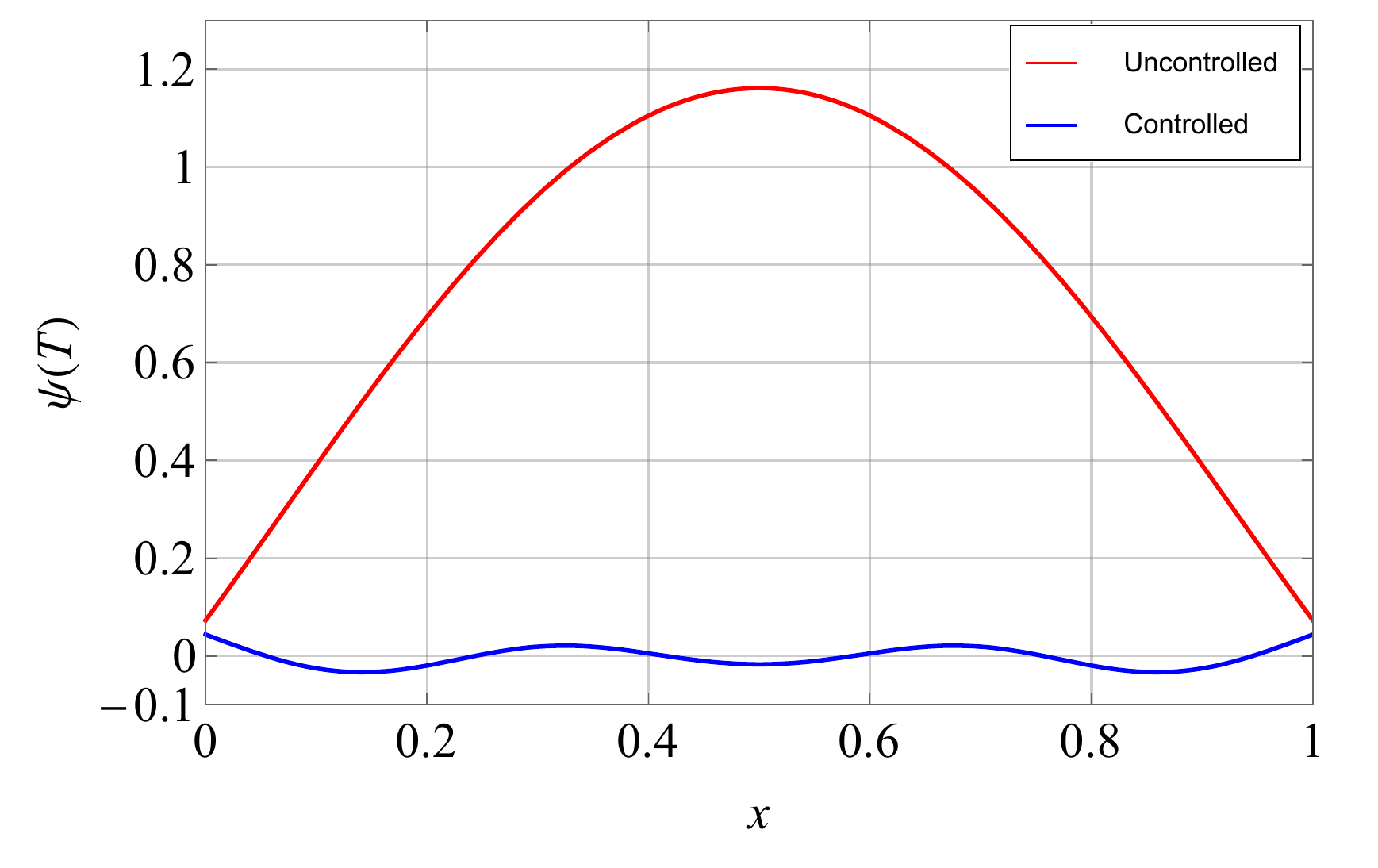}
\caption{The final state for uncontrolled and controlled solutions.}
\end{figure}
\bigskip

\begin{figure}[H]
\centering
\includegraphics[scale=0.5]{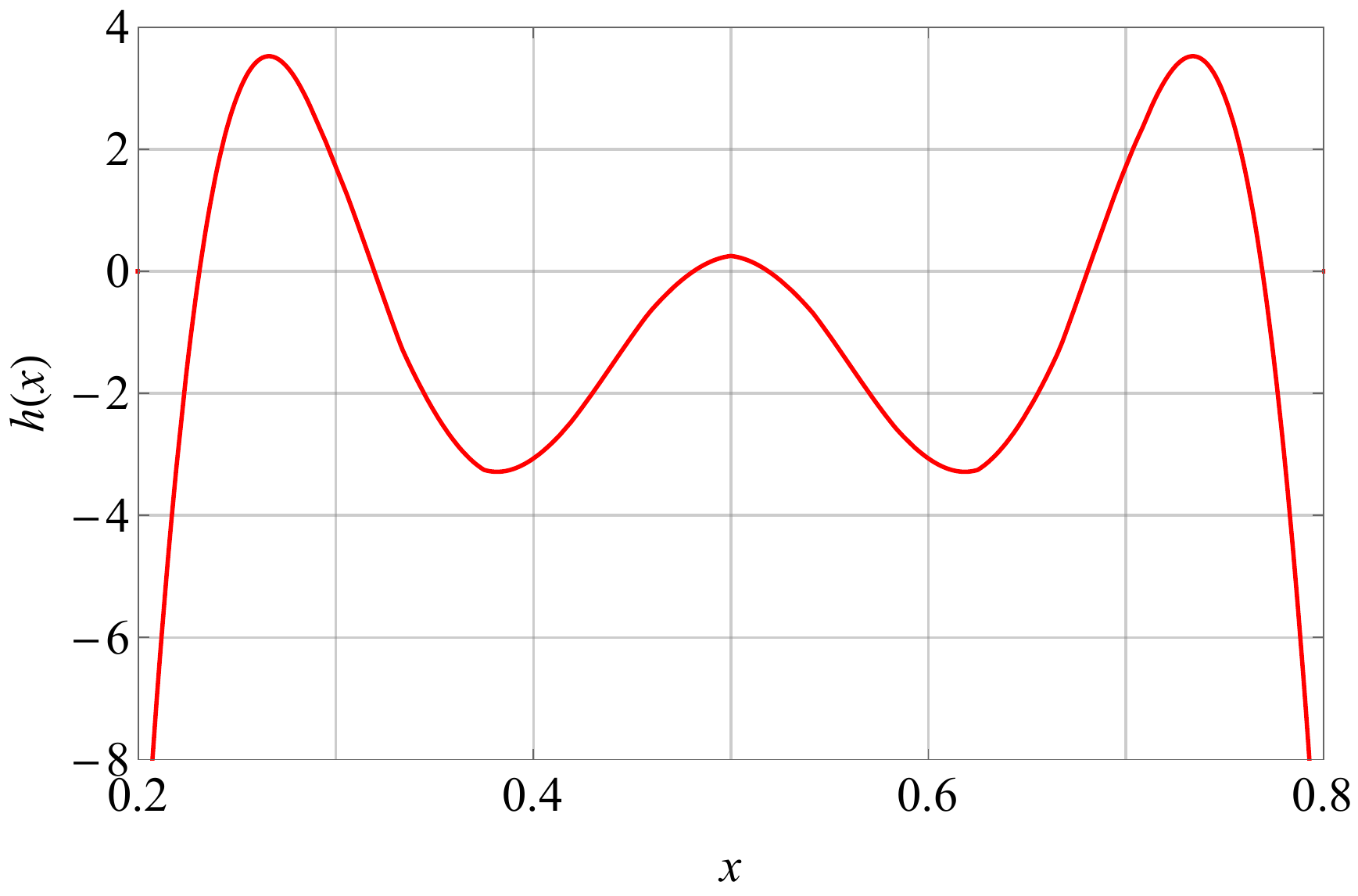}
\caption{The computed control $h$.}
\end{figure}
\bigskip

\begin{table}[ht]
\caption{Numerical results for $T=0.02$, $\tau=0.01$ and $tol=10^{-3}$.}
\centering\setlength\arraycolsep{13pt}
$\begin{array}{cccc}
\hline\\[-3mm]
 \varepsilon  & 10^{-2} &  10^{-3} & 10^{-4} \\
\specialrule{.1em}{.1em}{.1em}\\[-3mm]
 N_{\text{iter}} & 6 & 10 & 25 \\
\hline\\[-3mm]
 \|\Psi(T)\| & 8.54\times 10^{-2} & 7.27\times 10^{-2} & 6.47\times 10^{-2} \\
\hline\\[-3mm]
 \|h\|_{L^2(\omega)} & 0.9478 & 1.1325 & 2.2109 \\
\hline\\[-3mm]
\end{array}$
\end{table}
We clearly see that the distance $\|\Psi(T)\|$ to the target zero decreases and the norm of the impulse control $\|h\|_{L^2(\omega)}$ increases as $\varepsilon$ diminishes.

\begin{rmq}
The above numerical tests show that the proposed algorithm yields accurate and fast results for the numerical computation of distributed impulsive controls controlling the heat equation with dynamic boundary conditions at a single instant of time $\tau$. This approach can be generalized for more general and multi-dimensional parabolic systems with dynamic boundary conditions.
\end{rmq}

\section{Conclusions and Remarks} \label{sec6}
In this work, a logarithmic convexity result has been proved for the 1-D heat equation with dynamic boundary conditions. As an application, the impulsive approximate controllability for the system \eqref{1.1} has been established with an explicit bound of the cost. The proof is based on the Carleman commutator approach. Afterward, a constructive algorithm has been developed to numerically construct the impulse control of minimal $L^2$-norm. This has been done by combining a penalized HUM approach and a CG method. Finally, a numerical simulation has been performed to illustrate the theoretical result of impulse approximate controllability.

To the best of the authors knowledge, dynamical systems with impulsive controls have not been much studied numerically, which opens the doors to many possibilities for dealing with such problems. This work can be generalized in several ways, for instance, one would study the case of an infinite number of impulses or the case of some perturbations as: nonlinearities, delays and non local conditions. One would also change the type of impulses by considering non-instantaneous impulses. Such problems would be of much interest to investigate.

\section*{Acknowledgment}
The authors would like to express their thanks to anonymous referees for constructive comments and suggestions that improved the quality of this manuscript.

\end{document}